\UseAllTwocells \xyoption{frame} \CompileMatrices
\newtheorem{prop}{Proposition}[section]
\newtheorem{lem}[prop]{Lemma}
\newtheorem{cor}[prop]{Corollary}
\newtheorem{thm}[prop]{Theorem}
\newtheorem{defn}[prop]{Definition}
\newtheorem{rmk}[prop]{Remark}
\newcommand{\noprint}[1]{}
\renewcommand{\tilde}{\widetilde}
\newcommand{\toto}{\rightrightarrows}
\newcommand{\ldiag}[1]%
       {\makebox[0cm]{${\scriptstyle#1}\downarrow\phantom{\scriptstyle#1}$}}
\newcommand{\ldiagup}[1]%
       {\makebox[0cm]{${\scriptstyle#1}\uparrow\phantom{\scriptstyle#1}$}}
\newcommand{\rdiag}[1]%
       {\makebox[0cm]{$\phantom{\scriptstyle#1}\downarrow{\scriptstyle#1}$}}
\newcommand{\sediagr}[1]%
       {\makebox[0cm]{$\phantom{\scriptstyle#1}\searrow{\scriptstyle#1}$}}
\newcommand{\nediagr}[1]%
       {\makebox[0cm]{$\phantom{\scriptstyle#1}\nearrow{\scriptstyle#1}$}}
\newcommand{\rdiagup}[1]%
       {\makebox[0cm]{$\phantom{\scriptstyle#1}\uparrow{\scriptstyle#1}$}}
\newcommand{\swdiag}[1]%
       {\makebox[0cm]{$\phantom{\scriptstyle#1}\swarrow{\scriptstyle#1}$}}
\newcommand{\sediag}[1]%
       {\makebox[0cm]{${\scriptstyle#1}\searrow\phantom{\scriptstyle#1}$}}
\newcommand{\nediag}[1]%
       {\makebox[0cm]{${\scriptstyle#1}\nearrow\phantom{\scriptstyle#1}$}}
\newcommand{\doublearrowstack}[2]%
                      {{{{\scriptstyle#1}\atop{\textstyle\longrightarrow}}\atop{{\textstyle\longrightarrow}\atop{\scriptstyle#2}}}}
\newcommand{\rightleftarrowstack}[2]%
                      {{{{\scriptstyle#1}\atop{\textstyle\longrightarrow}}\atop{{\textstyle\longleftarrow}\atop{\scriptstyle#2}}}}
\newcommand{\leftrightarrowstack}[2]%
                      {{{{\scriptstyle#1}\atop{\textstyle\longleftarrow}}\atop{{\textstyle\longrightarrow}\atop{\scriptstyle#2}}}}
\newcommand{\overtoparrow}%
{\makebox[0cm]{\beginpicture \setcoordinatesystem units
<.8cm,.4cm> point at 0 0 \setplotarea x from -3 to 3, y from 0 to
1 \setquadratic \plot -3 0 0 1 3 0 / \put{\vector(3,-1){0}}[Bl] at
3 0
\endpicture}}
\newcommand{\underbottomarrow}%
{\makebox[0cm]{\beginpicture \setcoordinatesystem units
<.8cm,.4cm> point at 0 0 \setplotarea x from -3 to 3, y from 0 to
1 \setquadratic \plot -3 1 0 0 3 1 / \put{\vector(3,1){0}}[Bl] at
3 1
\endpicture}}
\newcommand{\ses}[5]%
{0\longrightarrow#1\stackrel{#2}{ \longrightarrow}#3\stackrel{#4}{
\longrightarrow}#5\longrightarrow0}
\newcommand{\dt}[6]%
{#1\stackrel{#2}{longrightarrow}#3
\stackrel{#4}{\longrightarrow}#5 \stackrel{#6}{\longrightarrow}
#1[1]}
\newcommand{\cat}[1]%
{(\mbox{\rm #1})}
\def\Label#1{\label{#1}{\tt [#1]}\phantom{h}}
\def\Label{\label}
\begin{document}

\title{On the $K$-theory of Toric Stack Bundles}

\author{Yunfeng Jiang}
\address{Department of Mathematics\\ University of Utah\\ 155 S 1400 E JWB 233\\Salt Lake city\\ UT 84112\\ USA}
\email{jiangyf@math.utah.edu}

\author{Hsian-Hua Tseng}
\address{Department of Mathematics\\ University of Wisconsin-Madison\\ Van Vleck Hall, 480 Lincoln Drive \\Madison\\ WI 53706-1388 \\ USA}
\email{tseng@math.wisc.edu}

\date{\today}

\begin{abstract}
Simplicial toric stack bundles are smooth Deligne-Mumford stacks over smooth varieties with fibre a toric Deligne-Mumford stack.  We compute the Grothendieck $K$-theory of simplicial toric stack bundles and study the Chern character homomorphism.
\end{abstract}
\maketitle

\section{Introduction}
Simplicial toric stack bundles, as defined in \cite{Jiang}, are bundles over a smooth base variety $B$
with fibers toric Deligne-Mumford stacks in the sense of \cite{BCS}. In this paper we compute the Grothendieck $K$-theory of simplicial toric stack bundles. 

In \cite{Jiang}, the construction of toric Deligne-Mumford stacks was slightly generalized by extending the notion of 
stacky fans. A stacky fan\footnote{In \cite{Jiang} this is called an {\em extended} stacky fan.} is a triple
$\mathbf{\Sigma}:=(N,\Sigma,\beta)$, where $N$ is a finitely generated abelian group\footnote{We denote by $N_{tor}$ the torsion subgroup of $N$.}
 of rank $d$,  $\Sigma$ is a simplicial fan in the lattice $\overline{N}=N/N_{tor}\subset N_\mathbb{Q}$, and 
$\beta:
\mathbb{Z}^{m}\to N$ is a map determined by integral vectors
$b_{1},\ldots,b_{n},b_{n+1},\ldots,b_{m}\in N$ ($m\geq n$) satisfying the
condition that for $1\leq i\leq n$ the image $\overline{b}_{i}\in \overline{N}$ under the projection
$N\to \overline{N}$ generates the ray $\rho_{i}\in\Sigma$.  We call
$\{b_{n+1},\cdots,b_{m}\}$ the extra data in
$\mathbf{\Sigma}$. The stacky fan $\mathbf{\Sigma}$ yields an exact sequence,
\begin{equation}\label{exact}
1\longrightarrow \mu\longrightarrow
G\stackrel{\alpha}{\longrightarrow}
(\mathbb{C}^{*})^{m}\longrightarrow T\longrightarrow 1
\end{equation}
where $T=(\mathbb{C}^{*})^{d}$. We associated to $\mathbf{\Sigma}$ a {\em toric Deligne-Mumford stack}
$\mathcal{X}(\mathbf{\Sigma}):=[Z/G]$, where $Z=(\mathbb{C}^{n}\setminus\mathbb{V}(J_{\Sigma}))\times (\mathbb{C}^{*})^{m-n}$, the 
ideal $J_{\Sigma}$ is the irrelevant ideal of the fan $\Sigma$,  and $G$ acts on $Z$ via the homomorphism $\alpha: G\to (\mathbb{C}^{*})^{m}$ above.

Removing the extra data $\{b_{n+1},\cdots,b_{m}\}$ from the map $\beta$ yields $\beta_{min}: \mathbb{Z}^{n}\to N$ given by the integral vectors
$\{b_1,\cdots,b_{n}\}$. The triple $\mathbf{\Sigma_{min}}:=(N,\Sigma,\beta_{min})$ is the stacky fan 
in the sense of \cite{BCS}. The toric Deligne-Mumford stack $\mathcal{X}(\mathbf{\Sigma_{min}})$ is isomorphic to $\mathcal{X}(\mathbf{\Sigma})$, see \cite{Jiang}. The stacky fan $\mathbf{\Sigma_{min}}$ may be interpreted as the minimal representation of the associated toric Deligne-Mumford stack.

Let $P\to B$ be a principal $(\mathbb{C}^{*})^{m}$-bundle,  let
$^{P}\mathcal{X}(\mathbf{\Sigma})$  be the quotient stack
$[(P\times_{(\mathbb{C}^{*})^{m}}Z)/G]$, where
$G$ acts on $B$ trivially and on $(\mathbb{C}^{*})^{m}$
via the map $\alpha$ above.  Then
$^{P}\mathcal{X}(\mathbf{\Sigma})$ is a toric stack bundle
over $B$ with fibre the  toric Deligne-Mumford stack
$\mathcal{X}(\mathbf{\Sigma})$.  The extra data
$\{b_{n+1},\cdots,b_{m}\}$ in $\mathbf{\Sigma}$ can be put
into the $Box(\mathbf{\Sigma})$ which do not influence the
structure of the toric stack bundle
$^{P}\mathcal{X}(\mathbf{\Sigma})$. The choice of torsion and
nontorsion extra data does affect the structure of
$^{P}\mathcal{X}(\mathbf{\Sigma})$, but not the Chen-Ruan (orbifold) cohomology,
see \cite{Jiang}.

Let $\rho_{i}\in \Sigma$ be a ray. There is a corresponding line bundle $\mathcal{L}_{i}$ over $^{P}\mathcal{X}(\mathbf{\Sigma})$,  which is  the trivial line bundle $\mathbb{C}$ over $P\times_{(\mathbb{C}^{*})^{m}}Z$ with the $G$-action given by the $i$-th component of the map $\alpha$. The ray $\rho_i$ also defines a line bundle $L_i$ over $\mathcal{X}(\mathbf{\Sigma})$ via the $i$-th component of $\alpha$. The line bundle $\mathcal{L}_{i}$ can be taken as the twist $P(L_i)$ of $L_i$  by the principle $(\mathbb{C}^{*})^{m}$-bundle $P$.

Let $R$ denote the character ring of the group $G_{min}$, which is isomorphic to $DG(\beta_{min})$ in the Gale dual map $\beta_{min}^{\vee}: \mathbb{Z}^{n}\to  DG(\beta_{min})$.  Every character $\chi\in R$  gives a line bundle $\mathcal{L}_{\chi}$ over $^{P}\mathcal{X}(\mathbf{\Sigma})$. The line bundle
$\mathcal{L}_{i}$ is given by the standard character $\chi_{i}$ induced by the standard generator $x_i$ on $\mathbb{Z}^{n}$. We let $x_{i}$ represent the class $[\mathcal{L}_{i}]$ in the $K$-theory. 
Let $M=N^{\star}$ be the dual of $N$. For $\theta\in
M$, let $\xi_{\theta}\to B$ be the line bundle coming
from the principal $T$ bundle $E\to B$ by ``extending''
the structure group via $\chi^{\theta}:T\to
\mathbb{C}^{*}$, where $E\to B$ is induced from
the $(\mathbb{C}^{*})^{m}$-bundle $P$ via the map $(\mathbb{C}^{*})^{m}\to T$ in (\ref{exact}).  Let 
$\{v_1,\cdots,v_d\}$ be a basis of $\overline{N}=\mathbb{Z}^{d}$, we choose a basis $\{u_1,\cdots,u_d\}$ of $M$,  which is dual to $\{v_1,\cdots,v_d\}$.
Write $\xi_{i}=\xi_{u_{i}}$.

Let $K(B)$ be the $K$-theory ring of the smooth variety $B$. Let $C(^{P}\mathbf{\Sigma})$ be the ideal in the ring $K(B)\otimes R$
generated by the elements
\begin{equation}\label{ideal}
\left(\prod_{1\leq j\leq n}x_{j}^{\langle\theta,b_{j}\rangle}-\prod_{1\leq i\leq d}(\xi^{\vee}_{i})^{\langle\theta,v_{i}\rangle}\right)_{\theta\in
M},
\end{equation} 
where $\xi^{\vee}_{i}$ is the dual of the line bundle $\xi_i$.
Let $I_{\mathbf{\Sigma}}$ be the ideal generated by
\begin{equation}\label{ideal2}
\prod_{i\in I}(1-x_i)
\end{equation}
where $I\subseteq[1,\cdots,n]$ such that $\{\rho_i| i\in I\}$ do not form a cone 
in $\Sigma$.

\begin{thm}\label{main}
Let $K_{0}(^{P}\mathcal{X}(\mathbf{\Sigma}))$ be the Grothendieck $K$-theory ring of the toric stack bundle $^{P}\mathcal{X}(\mathbf{\Sigma})$. Then the morphism
$$\phi:  \frac{K(B)\otimes R}{I_{\mathbf{\Sigma}}+C(^{P}\mathbf{\Sigma})}\longrightarrow K_{0}(^{P}\mathcal{X}(\mathbf{\Sigma})),$$
which send $\chi$ to $[\mathcal{L}_{\chi}]$, is an isomorphism.
\end{thm}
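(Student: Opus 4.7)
The plan is to follow the strategy for the $K$-theory of toric Deligne--Mumford stacks and extend it to the bundled setting. First I reduce to the minimal stacky fan: since $\mathcal{X}(\mathbf{\Sigma}) \cong \mathcal{X}(\mathbf{\Sigma_{min}})$ by \cite{Jiang}, this extends to ${}^{P}\mathcal{X}(\mathbf{\Sigma}) \cong {}^{P_{min}}\mathcal{X}(\mathbf{\Sigma_{min}})$ after reducing the structure group of $P$ along $(\mathbb{C}^{*})^{m} \to (\mathbb{C}^{*})^{n}$, and since both $I_{\mathbf{\Sigma}}$ and $C({}^{P}\mathbf{\Sigma})$ involve only $x_1,\dots,x_n$, neither side of $\phi$ is affected. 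I therefore assume $m = n$ throughout and identify $K_{0}({}^{P}\mathcal{X}(\mathbf{\Sigma})) = K_{G}(W)$ where $W := P \times_{(\mathbb{C}^{*})^{n}} Z$, using that the Grothendieck $K$-theory of a quotient stack by a linearly reductive group equals the equivariant $K$-theory of the prequotient.

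The next step is to verify that $\phi$ is well-defined. For the Stanley--Reisner relations $I_{\mathbf{\Sigma}}$: each $\mathcal{L}_i$ carries a tautological section from the $i$-th coordinate function on $Z$, whose zero locus is the divisor corresponding to $\rho_i$; when $\{\rho_i : i \in I\}$ does not form a cone, the irrelevant ideal condition forces these divisors to have empty common intersection, so a Koszul resolution gives $\prod_{i \in I}(1 - [\mathcal{L}_i^{-1}]) = 0$, equivalent up to units to $\prod_{i \in I}(1 - x_i) = 0$. For the relations in $C({}^{P}\mathbf{\Sigma})$: for each $\theta \in M$, the character $\prod \chi_j^{\langle \theta, b_j \rangle}$ of $(\mathbb{C}^{*})^{n}$ factors through $T$ by exactness of (\ref{exact}); the corresponding line bundle on ${}^{P}\mathcal{X}(\mathbf{\Sigma})$ is, on the one hand, $\bigotimes \mathcal{L}_j^{\langle \theta, b_j \rangle}$ via the $G$-action through $\alpha$, and on the other the pullback from $B$ of $\xi_\theta^\vee = \prod \xi_i^{-\langle \theta, v_i \rangle}$ via descent of the $T$-character to the associated $T$-bundle $E$, which yields (\ref{ideal}).

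For the main claim that $\phi$ is an isomorphism, I plan to enlarge $W$ to the $G$-equivariant vector bundle $V := P \times_{(\mathbb{C}^{*})^{n}} \mathbb{C}^{n} \to B$. Its complement $V \setminus W$ is a union of coordinate subbundles corresponding to the non-cones, and equivariant homotopy invariance (retraction to the zero section $B \subset V$) gives $K_{G}(V) \cong K_{G}(B) \cong K(B) \otimes R$; under this identification the class $[\mathcal{L}_i]$ is expressed in terms of the character $x_i$ together with the line bundle $\eta_i = P \times_{(\mathbb{C}^{*})^{n}} \mathbb{C}_{\chi_i}$ on $B$, and the identity $\prod \eta_j^{\langle \theta, b_j \rangle} = \xi_\theta$ arising from (\ref{exact}) applied to $P$ converts the character relations in $R(G)$ into the twisted relations of $C({}^{P}\mathbf{\Sigma})$. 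Quillen's localization sequence for the open embedding $W \hookrightarrow V$ then identifies the kernel of $K_{G}(V) \to K_{G}(W)$ with the Koszul ideal for the non-cones, i.e.\ $I_{\mathbf{\Sigma}}$.

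The hardest part is pinning down the localization kernel in the twisted setting: while the Koszul inclusion is standard, the reverse direction --- ruling out extra relations coming from the interaction of the bundle twist with the non-cone strata --- is more delicate. The plan is a Mayer--Vietoris reduction along a trivializing open cover $\{U_\alpha\}$ of $B$, over which ${}^{P}\mathcal{X}(\mathbf{\Sigma})|_{U_\alpha} \cong U_\alpha \times \mathcal{X}(\mathbf{\Sigma_{min}})$; the local result then follows from the K\"unneth formula together with the known $K$-theory of $\mathcal{X}(\mathbf{\Sigma_{min}})$ (which is the $B = \mathrm{pt}$ case of the present theorem, provable by the same equivariant localization argument). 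The cocycles of $P$ govern the gluing and produce exactly the $\xi$-twisted relations in $C({}^{P}\mathbf{\Sigma})$.
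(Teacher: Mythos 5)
Your argument is essentially correct but organized quite differently from the paper's. The paper establishes the relations the same way you do, but then proves surjectivity by Noetherian induction on $\dim B$: it stratifies the base, compares the localization sequence $K_0(\pi^{-1}Z)\to K_0({}^{P}\mathcal{X}(\mathbf{\Sigma}))\to K_0(\pi^{-1}U)$ with the corresponding sequence tensored with $K(\mathcal{X}(\mathbf{\Sigma}))$, and feeds in a $G$-equivariant version of the SGA6 cellular-decomposition lemma ($K_0(U)\otimes K_G(M)\twoheadrightarrow K_G(U\times M)$ for a trivializing open $U$); injectivity is then handled by a short separate argument from the relations. You instead work on the prequotient and localize in the fiber direction: embed $W=P\times_{(\mathbb{C}^{*})^{n}}Z$ into the vector bundle $V=P\times_{(\mathbb{C}^{*})^{n}}\mathbb{C}^{n}$, use equivariant homotopy invariance $K_G(V)\cong K(B)\otimes R(G)$, and read off both surjectivity and the kernel from the single localization sequence for $W\hookrightarrow V$. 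This buys a simultaneous and arguably cleaner treatment of surjectivity and injectivity (the injectivity step is the thinnest part of the paper's proof), at the cost of invoking Thomason-style equivariant homotopy invariance and localization rather than the more elementary SGA6 d\'evissage.

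One correction to your final paragraph: the ``delicate reverse direction'' does not need a Mayer--Vietoris reduction over a trivializing cover, and you should not lean on Mayer--Vietoris for $K_0$ in any case --- without the higher $K$-groups the Zariski Mayer--Vietoris sequence is not exact, so it cannot rule out extra relations. Your own setup already closes the gap: the localization sequence is right-exact, so the kernel of $K_G(V)\to K_G(W)$ is \emph{exactly} the image of the pushforward from $V\setminus W$. Since $V\setminus W$ is a finite union of sub-vector-bundles $V_I$ over $B$, a d\'evissage on its components shows that $G_0^G(V\setminus W)$ is generated by pushforwards from the $V_I$; each $G_0^G(V_I)\cong K_G(B)$ is a cyclic module over $K_G(V)$ via restriction, and the projection formula identifies the image of each pushforward with the principal ideal generated by the Koszul class $[\mathcal{O}_{V_I}]=\prod_{i\in I}\bigl(1-[\eta_i^{\vee}]\,x_i^{-1}\bigr)$. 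After the change of variables $x_i\mapsto[\eta_i]x_i$ --- which, as you note, converts the untwisted circuit relations in $R(G)$ into $C({}^{P}\mathbf{\Sigma})$ via $\prod_j\eta_j^{\langle\theta,b_j\rangle}=\xi_{\theta}$ --- these generators produce exactly $I_{\mathbf{\Sigma}}$, and the proof is complete without any open cover of $B$.
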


In the reduced case, i.e. the abelian group $N$ is torison-free, the toric Deligne-Mumford stack $\mathcal{X}(\mathbf{\Sigma})$ is an orbifold. Then every character of $G$ can be lifted to a 
character of $(\mathbb{C}^{*})^{n}$. We have the corollary:
 
 \begin{cor}\label{main2}
Let $K_{0}(^{P}\mathcal{X}(\mathbf{\Sigma}))$ be the Grothendieck $K$-theory ring of the toric stack bundle $^{P}\mathcal{X}(\mathbf{\Sigma})$ with  $\mathcal{X}(\mathbf{\Sigma})$ a reduced toric Deligne-Mumford stack. Then the morphism
$$\phi:  \frac{K(B)[x_1,x_{1}^{-1},\cdots,x_{n},x_{n}^{-1}]}{I_{\mathbf{\Sigma}}+C(^{P}\mathbf{\Sigma})}\longrightarrow K_{0}(^{P}\mathcal{X}(\mathbf{\Sigma})),$$
which send $x_{i}$ to $[\mathcal{L}_{i}]$, is an isomorphism.
\end{cor}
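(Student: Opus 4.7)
The plan is to realize $K_{0}({}^{P}\mathcal{X}(\mathbf{\Sigma}))$ as an equivariant $K$-group and compute it by equivariant localization inside an ambient affine space. Writing ${}^{P}\mathcal{X}(\mathbf{\Sigma}) = [(P \times_{H} Z)/G]$ with $H := (\mathbb{C}^{*})^{m}$, one has $K_{0}({}^{P}\mathcal{X}(\mathbf{\Sigma})) = K_{H \times G}(P \times Z)$, where $H$ acts on $Z$ through its standard toric action and $G$ acts on $Z$ through $\alpha$. The key move is to embed $Z$ as an open subscheme of $V := \mathbb{C}^{m}$, compute the ambient $K$-theory of $P \times V$, and then excise the complement $V \setminus Z$.

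For the ambient step, $P \times V \to P$ is an $H \times G$-equivariant trivial vector bundle; the Thom isomorphism, combined with freeness of the $H$-action on $P$ and triviality of the $G$-action on $P$, yields
$$K_{H \times G}(P \times V) \;\cong\; K_{H \times G}(P) \;\cong\; K_{G}(B) \;\cong\; K(B) \otimes R_{G},$$
where $R_{G}$ is the character ring of $G$. Under this identification, $[\mathcal{L}_{j}] = [P(L_{j})]$ corresponds to $[\eta_{j}] \otimes \chi_{j}$, where $\eta_{j} := P \times_{H} \mathbb{C}_{e_{j}^{*}}$ is the line bundle on $B$ associated to the $j$-th coordinate character of $H$ and $\chi_{j} := e_{j}^{*} \circ \alpha$ is the corresponding character of $G$; in particular $\xi_{\theta} = \prod_{j=1}^{m}\eta_{j}^{\langle \theta, b_{j} \rangle}$ for each $\theta \in M$, because $\theta$ pulls back from $T$ to $H$ as $\sum_{j}\langle \theta, b_{j}\rangle e_{j}^{*}$.

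Next apply the equivariant localization sequence to $P \times Z \hookrightarrow P \times V$. The complement decomposes as $(\mathbb{V}(J_{\Sigma}) \times \mathbb{C}^{m-n}) \cup \bigcup_{j > n}(\mathbb{C}^{n} \times \{x_{j} = 0\})$, and the image of the pushforward into $K(B) \otimes R_{G}$ is generated by two families: (i) Stanley--Reisner classes $\prod_{i \in I}(1 - [\mathcal{L}_{i}])$ for subsets $I \subseteq \{1,\ldots,n\}$ whose rays do not span a cone of $\Sigma$, contributing $I_{\mathbf{\Sigma}}$; and (ii) classes $1 - [\mathcal{L}_{j}]$ for $j > n$, which force $\chi_{j} = \eta_{j}^{-1}$ and thereby eliminate the extra characters, reducing $R_{G}$ to $R$. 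Substituting (ii) into the defining relations $\prod_{j=1}^{m}\chi_{j}^{\langle \theta, b_{j} \rangle} = 1$ of $R_{G}$ (which come from the exact sequence (\ref{exact})) and setting $x_{j} = [\eta_{j}]\chi_{j}$ for $j \leq n$ produces
$$\prod_{j=1}^{n} x_{j}^{\langle \theta, b_{j} \rangle} \;=\; \prod_{j=1}^{m}[\eta_{j}]^{\langle \theta, b_{j} \rangle} \;=\; \xi_{\theta},$$
which, up to the convention distinguishing $\xi_{\theta}$ from $\xi_{\theta}^{\vee}$, is exactly the generating set of $C({}^{P}\mathbf{\Sigma})$. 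Combining (i) with these rewritten relations yields the presentation claimed for $\phi$. The main obstacle is the precise bookkeeping in this localization argument: one must verify that the Thom/quotient identification sends $[\mathcal{L}_{j}]$ to $[\eta_{j}] \otimes \chi_{j}$ (crucially using that $\mathcal{L}_{j}$ is defined as the twist $P(L_{j})$), that the pushforward image from $V \setminus Z$ is generated by exactly the two families above, and that the substitution rewriting of the $R_{G}$-relations produces $C({}^{P}\mathbf{\Sigma})$ and nothing further. Granting this, surjectivity of $\phi$ is automatic since the $[\mathcal{L}_{j}]$ together with pullbacks from $K(B)$ generate $K_{0}({}^{P}\mathcal{X}(\mathbf{\Sigma}))$, and injectivity follows from exactness of the localization sequence.
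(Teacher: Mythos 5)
Your argument is correct in outline, but it follows a genuinely different route from the paper's. In the paper, Corollary \ref{main2} is an immediate consequence of Theorem \ref{main}: when $N$ is torsion-free the group $\mu$ is trivial, so $\alpha_{min}:G_{min}\to(\mathbb{C}^{*})^{n}$ is injective and every character of $G_{min}$ lifts to $(\mathbb{C}^{*})^{n}$; hence $R$ is the quotient of $\mathbb{Z}[x_{1}^{\pm1},\dots,x_{n}^{\pm1}]$ by relations already absorbed into $C({}^{P}\mathbf{\Sigma})$, and one simply substitutes this presentation of $R$ into Theorem \ref{main} (this is the content of diagram (\ref{diagram-section4-2}) and Theorem \ref{toric-stack-red}). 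You instead reprove the statement from scratch by writing ${}^{P}\mathcal{X}(\mathbf{\Sigma})=[(P\times Z)/(H\times G)]$ with $H=(\mathbb{C}^{*})^{m}$ and running the equivariant localization sequence for $P\times Z\subset P\times\mathbb{C}^{m}$, i.e.\ the relative-over-$B$ version of the Borisov--Horja computation. What your route buys is a cleaner handle on injectivity: exactness of the localization sequence presents $K_{0}$ as the ambient ring $K(B)\otimes R_{G}$ modulo $\mathrm{im}(i_{*})$, whereas the paper's injectivity argument for Theorem \ref{main} is only sketched. What it costs is that the entire content is displaced into the step you flag but do not carry out: showing that $\mathrm{im}(i_{*})$ is generated by exactly the Koszul classes of the irreducible components of $P\times(\mathbb{C}^{m}\setminus Z)$. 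This requires (a) a d\'evissage/filtration argument reducing to reduced irreducible components, and (b) surjectivity of restriction to each linear stratum plus the projection formula, as in \cite{BH} and \cite{VV}; it is standard but it is where all the work lives. Two further points to make explicit: reducedness enters your argument precisely when you present $R_{G}$ as $\mathbb{Z}[\chi_{1}^{\pm1},\dots,\chi_{m}^{\pm1}]/(\prod_{j}\chi_{j}^{\langle\theta,b_{j}\rangle}-1)_{\theta\in M}$ --- this needs $\alpha$ injective, i.e.\ $\mu=1$, so your method does not directly yield Theorem \ref{main} when $N$ has torsion; and your relation $\prod_{j}x_{j}^{\langle\theta,b_{j}\rangle}=\xi_{\theta}$ differs from the paper's $\xi_{\theta}^{\vee}$ by the associated-bundle convention, which you should fix once and carry through (you are right that $[\mathcal{L}_{j}]=[\eta_{j}]\otimes\chi_{j}$ only under the reading $\mathcal{L}_{j}=P(L_{j})$, not under the ``trivial bundle with $G$-action'' description).
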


Our proof of the main theorem is based on computations of the $K$-theory rings of toric Deligne-Mumford stacks \cite{BH}, and of toric bundles \cite{SU}. 

This paper is organized as follows. The basic construction of toric stack
bundles defined in \cite{Jiang} is reviewed in Section \ref{review}. Chen-Ruan orbifold cohomology ring of toric stack bundles is discussed in Section \ref{CR-ring}. In Section \ref{k-theory} we compute the $K$-theory ring of toric stack bundles, and in Section \ref{Chern-Character} we show that there is a Chern character isomorphism from the $K$-theory of the toric stack bundle to the Chen-Ruan cohomology ring. In Section \ref{gerbes} we give an interesting example, where we compute the $K$-theory ring of finite abelian gerbes over smooth varieties and compare with the Chen-Ruan cohomology calculated in \cite{Jiang}.

\subsection*{Conventions}
In this paper we work algebraically over the field of
complex numbers.  We use the rational numbers $\mathbb{Q}$ as
coefficients of (orbifold) Chow ring and (orbifold) cohomology ring.  
By an orbifold we mean a smooth
Deligne-Mumford stack with trivial generic stabilizer.
We refer to \cite{BCS} for the construction of Gale dual
$(\beta)^{\vee}: \mathbb{Z}^{m}\to DG(\beta)$ from 
$\beta: \mathbb{Z}^{m}\to N$. We write 
$\mathbb{C}^{*}=\mathbb{C}\setminus \{0\}$. $N^{\star}$ denotes the dual of 
$N$ and $N\to \overline{N}$ is the natural map
modulo torsion. 

For the cones in $\Sigma$, we assume that the rays $\rho_1,\cdots,\rho_d$ span a top dimensional 
cone $\sigma\in\Sigma$, and $\rho_{d+1},\cdots,\rho_{n}$ are the other rays.  Let $v_i\in\rho_i$ be such that $\{v_1,\cdots,v_d\}$ is a basis of $\overline{N}=\mathbb{Z}^{d}$. Let $\{u_1,\cdots,u_d\}$ be the dual basis in $M=N^{\star}$.

\subsection*{Acknowledgments}
Y. J. thanks the Institute of Mathematics in Chinese Academy of Science for financial support during a visit in May, 2008, where part of this work was done. H.-H. T. is supported in part by NSF grant DMS-0757722.

\section{Toric Stack Bundles}\Label{review}
In this section we review the basic construction of toric stack bundles, see \cite{Jiang} for details.
\subsection{Toric Deligne-Mumford Stacks}
  Let $N$ be a finitely generated abelian group of rank $d$ and  $\overline{N}=N/N_{tor}$  the lattice generated by
$N$ in the $d$-dimensional vector space
$N_{\mathbb{Q}}:=N\otimes_{\mathbb{Z}}\mathbb{Q}$. Write $\overline{b}$ for 
the image of  $b$ under the natural map
$N\to \overline{N}$.  Let $\Sigma$ be a rational simplicial fan in
$N_{\mathbb{Q}}$. Suppose  $\rho_{1},\ldots,\rho_{n}$ are the rays
in $\Sigma$. We fix $b_{i}\in N$ for $1\leq i\leq n$ such that
$\overline{b}_{i}$ generates the ray $\rho_{i}$. Let
$\{b_{n+1},\ldots,b_{m}\}\subset N$. We  consider the
homomorphism $\beta: \mathbb{Z}^{m}\to N$
determined by the elements $\{b_{1},\ldots,b_{m}\}$. We require
that $\beta$ has finite cokernel.
\begin{defn}
The triple $\mathbf{\Sigma}:=(N,\Sigma,\beta)$ is called a
stacky fan.
\end{defn}
\begin{rmk}
If $m=n$, then $\mathbf{\Sigma}$ is the stacky fan in the sense of Borisov-Chen-Smith \cite{BCS}.
\end{rmk}

The stacky fan $\mathbf{\Sigma}$ determines two exact sequences:
$$0\longrightarrow DG(\beta)^{\star}\longrightarrow \mathbb{Z}^{m}\stackrel{\beta}{\longrightarrow} N\longrightarrow Coker(\beta)\longrightarrow
0,$$
$$0\longrightarrow N^{\star}\longrightarrow \mathbb{Z}^{m}\stackrel{\beta^{\vee}}{\longrightarrow}
DG(\beta)\longrightarrow
Coker(\beta^{\vee})\longrightarrow 0,$$ where
$\beta^{\vee}$ is the Gale dual of $\beta$. As a
$\mathbb{Z}$-module, $\mathbb{C}^{*}$ is divisible, so it is
an injective $\mathbb{Z}$-module, and hence the functor
$\text{Hom}_{\mathbb{Z}}(-,\mathbb{C}^{*})$ is exact (see e.g \cite{Lang}).  This yields an exact sequence:
$$1\to \text{Hom}_{\mathbb{Z}}(Coker(\beta^{\vee}),\mathbb{C}^{*})\to
\text{Hom}_{\mathbb{Z}}(DG(\beta),\mathbb{C}^{*})\to
\text{Hom}_{\mathbb{Z}}(\mathbb{Z}^{m},\mathbb{C}^{*})\to
\text{Hom}_{\mathbb{Z}}(N^{\star},\mathbb{C}^{*})\to 1.$$
Wrtie
$\mu:=\text{Hom}_{\mathbb{Z}}(Coker(\beta^{\vee}),\mathbb{C}^{*})$, $G:=\text{Hom}_{\mathbb{Z}}(DG(\beta),\mathbb{C}^{*})$, $T:=\text{Hom}_{\mathbb{Z}}(N^{\star},\mathbb{C}^{*})$, then the above sequence reads 
\begin{equation}\label{stack}
1\longrightarrow \mu\longrightarrow
G\stackrel{\alpha}{\longrightarrow}
(\mathbb{C}^{*})^{m}\longrightarrow T\longrightarrow 1,
\end{equation}
which is the same as (\ref{exact}). Define $Z=(\mathbb{C}^{n}\setminus \mathbb{V}(J_{\Sigma}))\times (\mathbb{C}^{*})^{m-n}$,
where $J_{\Sigma}$ is the irrelevant ideal of the fan $\Sigma$.
There exists a natural action of $(\mathbb{C}^{*})^{m}$
on $Z$. The group $G$ acts on $Z$ through the map
$\alpha$ in (\ref{stack}). The quotient stack $[Z/G]$
is associated to the groupoid $Z\times G\toto Z$.
The morphism $\varphi: Z\times G\to
Z\times Z$ to be $\varphi(x,g)=(x,g\cdot x)$ is
finite, hence $[Z/G]$ is a Deligne-Mumford stack. 

\begin{defn}
For a stacky fan $\mathbf{\Sigma}=(N,\Sigma,\beta)$, define $\mathcal{X}(\mathbf{\Sigma}):=[Z/G]$.
\end{defn}

Let $\mathbf{\Sigma}$ be a stacky fan. Let $\beta_{min}: \mathbb{Z}^{n}\to N$ be the map given by the first $n$ integral vectors $\{b_1,\cdots,b_n\}$
in the map $\beta$. Then $\mathbf{\Sigma_{min}}=(N,\Sigma,\beta_{min})$
is a stacky fan, which we call the minimal stacky fan.
From the definitions, we have the following
commutative diagram:
\[
\begin{CD}
0 @ >>>\mathbb{Z}^{n}@ >>> \mathbb{Z}^{m}@ >>> \mathbb{Z}^{m-n} @
>>> 0\\
&& @VV{\beta_{min}}V@VV{\beta}V@VV{\widetilde{\beta}}V \\
0@ >>> N @ >{id}>>N@ >>> 0 @>>> 0.
\end{CD}
\]
From the definition of Gale dual, we compute that
$DG(\widetilde{\beta})=\mathbb{Z}^{m-n}$ and
$\widetilde{\beta}^{\vee}$ is an isomorphism. So by Lemma 2.3
in \cite{BCS}, applying the Gale dual yields
\begin{equation}
\begin{CD}
0 @ >>>\mathbb{Z}^{m-n}@ >>> \mathbb{Z}^{m}@ >>> \mathbb{Z}^{n} @
>>> 0\\
&& @VV{\widetilde{\beta}^{\vee}}V@VV{\beta^{\vee}}V@VV{\beta_{min}^{\vee}}V \\
0@ >>> \mathbb{Z}^{m-n} @ >{}>>DG(\beta)@ >>> DG(\beta_{min}) @>>> 0.
\end{CD}
\end{equation}
Taking
$\text{Hom}_{\mathbb{Z}}(-,\mathbb{C}^{*})$ functor, we get
\begin{equation}\Label{second}
\begin{CD}
1 @ >>>G_{min}@ >{\varphi_{1}}>> G@ >>> (\mathbb{C}^{*})^{m-n}
@
>>> 1\\
&& @VV{\alpha_{min}}V@VV{\alpha}V@VV{\widetilde{\alpha}}V \\
1@ >>>(\mathbb{C}^{*})^{n} @ >>>(\mathbb{C}^{*})^{m}@
>>>(\mathbb{C}^{*})^{m-n} @>>> 1.
\end{CD}
\end{equation}

Let $\varphi_{0}: \mathbb{C}^{n}\setminus \mathbb{V}(J_{\Sigma})\to Z$ be the inclusion
defined by $z\mapsto (z,1)$. So $$(\varphi_{0}\times
\varphi_{1}, \varphi_{0}): ((\mathbb{C}^{n}\setminus \mathbb{V}(J_{\Sigma}))\times G_{min}\toto \mathbb{C}^{n}\setminus \mathbb{V}(J_{\Sigma}))\to
(Z\times G\toto Z)$$ defines a morphism between
groupoids.  Let $\varphi: [(\mathbb{C}^{n}\setminus \mathbb{V}(J_{\Sigma}))/G_{min}]\to [Z/G]$ be
the morphism of stacks induced from $(\varphi_{0}\times
\varphi_{1}, \varphi_{0})$.  

\begin{prop}[\cite{Jiang}]\Label{newstack}
The morphism $\varphi: \mathcal{X}(\mathbf{\Sigma_{min}})\to \mathcal{X}(\mathbf{\Sigma})$ is an isomorphism.  
\end{prop}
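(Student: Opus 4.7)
The plan is to prove that the morphism of groupoids $(\varphi_0 \times \varphi_1, \varphi_0)$ from $(\mathbb{C}^n \setminus \mathbb{V}(J_\Sigma)) \times G_{min} \toto \mathbb{C}^n \setminus \mathbb{V}(J_\Sigma)$ to $Z \times G \toto Z$ is a Morita equivalence of Lie groupoids, and hence induces an isomorphism of the associated quotient stacks. The crucial input is that $\widetilde{\alpha}: (\mathbb{C}^*)^{m-n} \to (\mathbb{C}^*)^{m-n}$ in diagram (\ref{second}) is an isomorphism; this follows by applying $\mathrm{Hom}_{\mathbb{Z}}(-,\mathbb{C}^*)$ to the fact, noted just above (\ref{second}), that $\widetilde{\beta}^{\vee}$ is an isomorphism.

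First I would verify that $(\varphi_0 \times \varphi_1, \varphi_0)$ is genuinely a morphism of groupoids, i.e. that it intertwines the $G_{min}$-action on $\mathbb{C}^n \setminus \mathbb{V}(J_\Sigma)$ with the $G$-action on $Z$ via $\varphi_1$. This reduces to the identity $\alpha(\varphi_1(g))\cdot(z,1) = (\alpha_{min}(g)\cdot z,1)$ for $z \in \mathbb{C}^n \setminus \mathbb{V}(J_\Sigma)$ and $g \in G_{min}$, which is exactly the commutativity of the left square in (\ref{second}), together with the observation that $\varphi_1(G_{min})$ maps to the identity in the quotient $(\mathbb{C}^*)^{m-n}$, so acts trivially on the second factor of $Z$.

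Next, to check Morita equivalence I would verify essential surjectivity and fully faithfulness. For essential surjectivity, given any $(z,w) \in Z$, use surjectivity of $\widetilde{\alpha}$ to choose $g \in G$ whose image in $(\mathbb{C}^*)^{m-n}$ is sent by $\widetilde{\alpha}$ to $w^{-1}$; then $g \cdot (z,w) \in \varphi_0(\mathbb{C}^n \setminus \mathbb{V}(J_\Sigma))$, so every $G$-orbit on $Z$ meets the image of $\varphi_0$. For fully faithfulness, I would show that for each pair $z,z' \in \mathbb{C}^n \setminus \mathbb{V}(J_\Sigma)$ the map
\[
\{\,g \in G_{min} : g\cdot z = z'\,\} \;\longrightarrow\; \{\,h \in G : h\cdot(z,1) = (z',1)\,\}
\]
induced by $\varphi_1$ is a bijection. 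Injectivity is immediate from injectivity of $\varphi_1$. Surjectivity uses injectivity of $\widetilde{\alpha}$: if $h \cdot (z,1) = (z',1)$, the second component forces the image of $h$ in $(\mathbb{C}^*)^{m-n}$ to lie in $\ker\widetilde{\alpha} = \{1\}$, so $h \in \varphi_1(G_{min})$.

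I do not anticipate a serious obstacle; the argument is essentially a diagram chase applied to (\ref{second}), combined with a straightforward orbit–stabilizer analysis. The only subtle point worth flagging is that the $G$-action on $Z$ is not free in general, so one cannot argue at the level of coarse quotients; working at the level of groupoids, where Morita equivalence automatically delivers an isomorphism of quotient stacks, bypasses this issue cleanly.
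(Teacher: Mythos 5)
Your argument is correct and follows the standard route: the paper itself states this proposition without proof, citing \cite{Jiang}, where it is established by precisely this kind of diagram chase on (\ref{second}) showing the groupoid morphism $(\varphi_0\times\varphi_1,\varphi_0)$ is an equivalence (essentially surjective because $G\to(\mathbb{C}^*)^{m-n}$ composed with $\widetilde{\alpha}$ is surjective, fully faithful because $\ker\widetilde{\alpha}$ is trivial and $G_{min}=\ker(G\to(\mathbb{C}^*)^{m-n})$). The only point worth making explicit is that essential surjectivity must hold with smooth local sections — which it does here, since $G\to(\mathbb{C}^*)^{m-n}$ is a smooth surjection of diagonalizable groups in characteristic zero — so that one genuinely gets an isomorphism of quotient stacks rather than merely a bijection on orbits and stabilizers.
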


\subsection{Toric Stack Bundles}

In this section we introduce the toric stack bundle
$^{P}\mathcal{X}(\mathbf{\Sigma})$.  Let $P\to B$ be a principal
$(\mathbb{C}^{*})^{m}$-bundle over a smooth variety $B$. Let $G$ act on the
fibre product $P\times_{(\mathbb{C}^{*})^{m}}Z$ via $\alpha$ in (\ref{stack}).
\begin{defn}
Define the toric stack bundle
$^{P}\mathcal{X}(\mathbf{\Sigma})\to B$ to be the
quotient stack
\begin{equation*}
^{P}\mathcal{X}(\mathbf{\Sigma}):=[(P\times_{(\mathbb{C}^{*})^{m}}Z)/G].
\end{equation*}
\end{defn}

Let $\mathbf{\Sigma}$ be a stacky fan. For a cone $\sigma\in
\Sigma$, define 
$link(\sigma):=\{\tau:
\sigma+\tau\in \Sigma, \sigma\cap \tau=0\}.$
Let
$\{\widetilde{\rho}_{1},\ldots,\widetilde{\rho}_{l}\}$ be the rays
in $link(\sigma)$. Then
$\mathbf{\Sigma/\sigma}=(N(\sigma)=N\slash N_{\sigma},\Sigma/\sigma,\beta(\sigma))$
is a stacky fan, where $\beta(\sigma):
\mathbb{Z}^{l+m-n}\to N(\sigma)$ is given by the
images of $b_{1},\ldots,b_{l},b_{n+1},\ldots,b_{m}$ under
$N\to N(\sigma)$. From the construction of 
toric Deligne-Mumford stacks, we have
$\mathcal{X}(\mathbf{\Sigma/\sigma}):=[Z(\sigma)/G(\sigma)]$,
where
$Z(\sigma)=(\mathbb{A}^{l}\setminus \mathbb{V}(J_{\Sigma/\sigma}))\times
(\mathbb{C}^{*})^{m-n}$,
$G(\sigma)=\text{Hom}_{\mathbb{Z}}(DG(\beta(\sigma)),\mathbb{C}^{*})$.
We have an action of $(\mathbb{C}^{*})^{m}$ on
$Z(\sigma)$ induced by the natural action of
$(\mathbb{C}^{*})^{l+m-n}$ on $Z(\sigma)$ and the
projection $(\mathbb{C}^{*})^{m}\to
(\mathbb{C}^{*})^{l+m-n}$.   As in \cite{Jiang}, let
\begin{eqnarray}
^{P}\mathcal{X}(\mathbf{\Sigma/\sigma})&=&[(P\times_{(\mathbb{C}^{*})^{m}}(\mathbb{C}^{*})^{l+m-n}
\times_{(\mathbb{C}^{*})^{l+m-n}}Z(\sigma))/G(\sigma)] \nonumber \\
&=&[(P\times_{(\mathbb{C}^{*})^{m}}Z(\sigma))/G(\sigma)].
\nonumber
\end{eqnarray}

\begin{prop}[\cite{Jiang}]
Let $\sigma$ be a cone in the  stacky fan
$\mathbf{\Sigma}$,  then
$^{P}\mathcal{X}(\mathbf{\Sigma/\sigma})$ defines a closed
substack of  $^{P}\mathcal{X}(\mathbf{\Sigma})$.
\end{prop}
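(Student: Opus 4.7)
The plan is to reduce to the non-bundle statement $\mathcal{X}(\mathbf{\Sigma/\sigma})\hookrightarrow\mathcal{X}(\mathbf{\Sigma})$, constructed at the level of groupoid presentations, and then to twist by the principal bundle $P$.

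First, I construct a closed embedding $Z(\sigma)\hookrightarrow Z$. Order the rays so that $\sigma$ is spanned by $\rho_{1},\ldots,\rho_{k}$ and the rays of $\mathrm{link}(\sigma)$ are $\rho_{k+1},\ldots,\rho_{k+l}$. Inside $Z=(\mathbb{C}^{n}\setminus\mathbb{V}(J_{\Sigma}))\times(\mathbb{C}^{*})^{m-n}$ consider the closed subscheme $\{z_{1}=\cdots=z_{k}=0\}$. Any remaining ray $\rho_{i}$ with $i>k+l$ fails to be compatible with a cone of $\Sigma$ containing $\sigma$, so the irrelevant ideal $J_{\Sigma}$ forces the corresponding coordinate $z_{i}$ to lie in $\mathbb{C}^{*}$. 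Collecting those forced-invertible coordinates into the extra data of $\mathbf{\Sigma/\sigma}$, and invoking Proposition \ref{newstack} to see that this does not change the stack, identifies the closed subscheme with $Z(\sigma)=(\mathbb{A}^{l}\setminus\mathbb{V}(J_{\Sigma/\sigma}))\times(\mathbb{C}^{*})^{m-n}$. I then check equivariance: the quotient $N\twoheadrightarrow N(\sigma)=N/N_{\sigma}$ fits with the projection $\mathbb{Z}^{m}\twoheadrightarrow\mathbb{Z}^{l+m-n}$ (dropping the $\sigma$-coordinates) into a commutative square compatible with $\beta$ and $\beta(\sigma)$. Gale dualizing and applying $\mathrm{Hom}(-,\mathbb{C}^{*})$ yields a homomorphism $G(\sigma)\to G$ making the embedding of $Z(\sigma)$ into $Z$ equivariant, and the $G$-action on $Z(\sigma)\subset Z$ factors through $G(\sigma)$. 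Hence $[Z(\sigma)/G(\sigma)]\hookrightarrow[Z/G]$ is a closed immersion of Deligne-Mumford stacks, which is the non-bundle case.

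Twisting by $P$ is then routine: since $Z(\sigma)\hookrightarrow Z$ is $(\mathbb{C}^{*})^{m}$-equivariant, forming fibre products over $(\mathbb{C}^{*})^{m}$ preserves closed immersions, so
$$P\times_{(\mathbb{C}^{*})^{m}}Z(\sigma)\hookrightarrow P\times_{(\mathbb{C}^{*})^{m}}Z$$
is a closed $G(\sigma)$-equivariant embedding of $B$-schemes, and passing to quotient stacks gives the desired closed substack ${}^{P}\mathcal{X}(\mathbf{\Sigma/\sigma})\hookrightarrow{}^{P}\mathcal{X}(\mathbf{\Sigma})$. The main obstacle is the combinatorial bookkeeping in the first step: verifying that the closed subscheme $\{z_{i}=0:\rho_{i}\subseteq\sigma\}\cap Z$ really matches $Z(\sigma)$ and that the forced-invertible coordinates may be harmlessly absorbed into the extra data of $\mathbf{\Sigma/\sigma}$. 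This is precisely where the generalized notion of stacky fan from \cite{Jiang}, together with Proposition \ref{newstack}, becomes essential.
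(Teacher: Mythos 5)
The paper offers no proof of this proposition: it is imported verbatim from \cite{Jiang}, so there is no internal argument to compare against. Your strategy is the standard one (cut out a coordinate subscheme of $Z$, identify the resulting quotient stack with $\mathcal{X}(\mathbf{\Sigma/\sigma})$, then twist by $P$), and it does work — but one step is stated in a way that is literally false and needs to be rerouted. The closed subscheme $W:=\{z_{1}=\cdots=z_{k}=0\}\cap Z$ is \emph{not} $Z(\sigma)$: once the forced-invertible coordinates are accounted for, $W\cong(\mathbb{A}^{l}\setminus\mathbb{V}(J_{\Sigma/\sigma}))\times(\mathbb{C}^{*})^{n-k-l}\times(\mathbb{C}^{*})^{m-n}$, carrying an extra torus factor for every ray that lies neither in $\sigma$ nor in $link(\sigma)$ (already visible for $\sigma$ a maximal cone of $\mathbb{P}^{2}$). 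Hence $Z(\sigma)$ sits inside $Z$ only as a \emph{locally closed} subscheme, it is not preserved by the $(\mathbb{C}^{*})^{m}$- or $G$-action, and the closed immersion of stacks cannot be obtained by "passing to quotients" of a scheme-level closed embedding $Z(\sigma)\hookrightarrow Z$, nor is $P\times_{(\mathbb{C}^{*})^{m}}Z(\sigma)\hookrightarrow P\times_{(\mathbb{C}^{*})^{m}}Z$ a closed embedding as claimed. The correct order of operations is: $W$ is closed and $(\mathbb{C}^{*})^{m}$-invariant, so $[(P\times_{(\mathbb{C}^{*})^{m}}W)/G]$ is tautologically a closed substack of $^{P}\mathcal{X}(\mathbf{\Sigma})$; the actual content is identifying this stack with $[(P\times_{(\mathbb{C}^{*})^{m}}Z(\sigma))/G(\sigma)]$, which is precisely where $W$ is read as the $Z$-scheme of an extended quotient stacky fan and (a variant of) Proposition \ref{newstack} applies. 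You do invoke that mechanism, so this is a bookkeeping error rather than a missing idea, but two related slips should also be corrected: the relevant projection is $\mathbb{Z}^{m}\to\mathbb{Z}^{m-k}$ onto \emph{all} non-$\sigma$ coordinates (not $\mathbb{Z}^{m}\to\mathbb{Z}^{l+m-n}$), and Gale duality then gives an isomorphism $G\cong G'$ (because $\beta|_{\sigma}:\mathbb{Z}^{k}\to N_{\sigma}$ is bijective for a simplicial cone) with $G(\sigma)$ only a subgroup of $G'$, so the assertion that "the $G$-action on $Z(\sigma)\subset Z$ factors through $G(\sigma)$" should be replaced by the statement that the groupoid $(W\times G\toto W)$ is Morita equivalent to $(Z(\sigma)\times G(\sigma)\toto Z(\sigma))$.
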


For each top dimensional cone $\sigma$ in
$\Sigma$, denote by $Box(\sigma)$ the set of elements $v\in
N$ such that $\overline{v}=\sum_{\rho_{i}\subseteq
\sigma}a_{i}\overline{b}_{i}$ for some $0\leq a_{i}<1$. Elements in  $Box(\sigma)$ are in one-to-one correspondence with elements in the finite group $N(\sigma)=N/N_{\sigma}$, where
$N(\sigma)$ is a local group of the stack
$\mathcal{X}(\mathbf{\Sigma})$. If $\tau\subseteq \sigma$ is a subcone, we define $Box(\tau)$ to be the set of
elements in $v\in N$ such that
$\overline{v}=\sum_{\rho_{i}\subseteq \tau}a_{i}\overline{b}_{i}$,
where $0\leq a_{i}<1$. Clearly $Box(\tau)\subset
Box(\sigma)$. In fact the elements in $Box(\tau)$ generate a
subgroup of the local group $N(\sigma)$. Let
$Box(\mathbf{\Sigma})$ be the union of $Box(\sigma)$ for all
$d$-dimensional cones $\sigma\in \Sigma$. For
$v_{1},\ldots,v_{n}\in N$, let
$\sigma(\overline{v}_{1},\ldots,\overline{v}_{n})$ be the unique
minimal cone in $\Sigma$ containing
$\overline{v}_{1},\ldots,\overline{v}_{n}$.

The following description for the inertia stack of $^{P}\mathcal{X}(\mathbf{\Sigma})$ is found in \cite{Jiang}.
\begin{prop}\Label{r-inertia}
Let $^{P}\mathcal{X}(\mathbf{\Sigma})\to B$ be a
toric stack bundle over a smooth variety $B$ with fibre
$\mathcal{X}(\mathbf{\Sigma})$, the  toric
Deligne-Mumford stack associated to the stacky fan
$\mathbf{\Sigma}$. Then its $r$-th inertia stack is
$$\mathcal{I}_{r}\left(^{P}\mathcal{X}(\mathbf{\Sigma})\right)=\coprod_{(v_{1},\cdots,v_{r})\in Box(\mathbf{\Sigma})^{r}}
~^{P}\mathcal{X}(\mathbf{\Sigma/\sigma}(\overline{v}_{1},\cdots,\overline{v}_{r})).$$
\end{prop}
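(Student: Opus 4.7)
The plan is to reduce to the standard computation of the inertia stack of a quotient stack $[Y/G]$ and then identify the components with toric stack bundles of the form $^{P}\mathcal{X}(\mathbf{\Sigma}/\sigma)$. Write $Y = P\times_{(\mathbb{C}^*)^m} Z$ so that $^{P}\mathcal{X}(\mathbf{\Sigma}) = [Y/G]$. Since $G$ is abelian (it is a subgroup of the torus $(\mathbb{C}^*)^m$), every conjugacy class is a single element and every centralizer is all of $G$; hence the general formula for inertia stacks of quotient stacks gives
\[
\mathcal{I}_r([Y/G]) \;=\; \coprod_{(g_1,\ldots,g_r)\in G^r}\, [Y^{g_1,\ldots,g_r}/G],
\]
where $Y^{g_1,\ldots,g_r}$ denotes the simultaneous fixed locus. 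The content of the proposition is therefore to (i) identify exactly which $r$-tuples $(g_1,\ldots,g_r)\in G^r$ give a nonempty fixed locus, and (ii) recognize the resulting quotient stack as a toric stack bundle of the claimed type.

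First I would treat the fibrewise case (i.e.\ $B = \mathrm{pt}$, $P$ trivial), which is the assertion for $\mathcal{I}_r(\mathcal{X}(\mathbf{\Sigma}))$. Using the coordinate description of $Z = (\mathbb{C}^n \setminus \mathbb{V}(J_\Sigma))\times (\mathbb{C}^*)^{m-n}$ and the action of $G$ via $\alpha\colon G\to (\mathbb{C}^*)^m$, the fixed locus of $g\in G$ on $Z$ is cut out by the vanishing of coordinates $z_i$ for which $\alpha(g)_i \ne 1$. Combined with the irrelevant ideal condition, non-emptiness of the fixed locus forces the corresponding set of rays $\{\rho_i\}$ to span a cone of $\Sigma$; by the standard Box dictionary (as in \cite{BCS} and the generalized version in \cite{Jiang}) the commuting $r$-tuples $(g_1,\ldots,g_r)$ with nonempty simultaneous fixed locus correspond bijectively to $r$-tuples $(v_1,\ldots,v_r)\in \mathrm{Box}(\mathbf{\Sigma})^r$, with the minimal cone containing $\overline{v}_1,\ldots,\overline{v}_r$ being exactly $\sigma(\overline{v}_1,\ldots,\overline{v}_r)$. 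A direct unwinding of the $G$-action on the fixed locus then identifies $[Z^{g_1,\ldots,g_r}/G]$ with $\mathcal{X}(\mathbf{\Sigma}/\sigma(\overline{v}_1,\ldots,\overline{v}_r))$, using the definition of $\beta(\sigma)$ and $G(\sigma)$.

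To promote this to the bundle setting, I would observe that $G$ acts on $Y = P\times_{(\mathbb{C}^*)^m} Z$ only through the second factor via $\alpha$, since $G$ acts trivially on $B$ and therefore on $P$. Consequently, for any $(g_1,\ldots,g_r)\in G^r$,
\[
Y^{g_1,\ldots,g_r} \;=\; P\times_{(\mathbb{C}^*)^m} Z^{g_1,\ldots,g_r},
\]
and the $G$-quotient interchanges with the $(\mathbb{C}^*)^m$-twist by $P$. Thus whenever the corresponding tuple comes from Box data $(v_1,\ldots,v_r)$ with associated cone $\sigma = \sigma(\overline{v}_1,\ldots,\overline{v}_r)$, the fixed component $[Y^{g_1,\ldots,g_r}/G]$ is isomorphic to $[(P\times_{(\mathbb{C}^*)^m} Z(\sigma))/G(\sigma)] = {}^{P}\mathcal{X}(\mathbf{\Sigma}/\sigma)$ by the very definition recalled before the proposition. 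Summing over $(v_1,\ldots,v_r)\in \mathrm{Box}(\mathbf{\Sigma})^r$ yields the stated description.

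The main obstacle I expect is bookkeeping: checking carefully that the Box correspondence between tuples $(g_1,\ldots,g_r)\in G^r$ producing nonempty fixed loci and tuples $(v_1,\ldots,v_r)\in \mathrm{Box}(\mathbf{\Sigma})^r$ works correctly when $N$ has torsion and when the extra data $\{b_{n+1},\ldots,b_m\}$ are present. The key computation here is tracking the Gale dual and the resulting character group $DG(\beta)$, together with verifying that the $(\mathbb{C}^*)^{m-n}$ factor of $Z$ (on which the extra data act) never contributes new fixed loci, so that the combinatorics collapses to exactly the Box elements of $\mathbf{\Sigma}$. Once this identification is in place, the bundle statement follows formally from the fibrewise one because twisting by $P$ commutes with taking fixed loci of the $G$-action.
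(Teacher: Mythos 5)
Your argument is correct and is essentially the standard one: the paper gives no proof of this proposition, quoting it directly from \cite{Jiang}, where it is established along exactly the lines you describe (the inertia formula $\mathcal{I}_r([Y/G])=\coprod_{(g_1,\ldots,g_r)}[Y^{g_1,\ldots,g_r}/G]$ for abelian $G$, the Box dictionary of \cite{BCS} identifying the tuples with nonempty fixed locus, and the fact that $G$ acts trivially on $P$ so that fixed loci commute with the twist). The one step worth writing out is why $(P\times_{(\mathbb{C}^*)^m}Z)^{g}=P\times_{(\mathbb{C}^*)^m}Z^{g}$: a point $[p,z]$ is $g$-fixed iff $(p,\alpha(g)z)=(pt,t\cdot z)$ for some $t\in(\mathbb{C}^*)^m$, and freeness of the principal-bundle action forces $t=1$, reducing the condition to $z\in Z^{g}$.
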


\section{The Chen-Ruan Orbifold Cohomology of Toric Stack Bundles.}\Label{CR-ring}
In this section we describe the ring structure of the orbifold cohomology  of toric stack bundles.

\subsection{Orbifold Cohomology}
The Chen-Ruan Chow ring of projective toric Deligne-Mumford stacks was computed in
\cite{BCS}, and generalized to semi-projective case in \cite{JT}. The calculation for Chen-Ruan orbifold cohomology ring is the same.
In this section  we assume that the toric Deligne-Mumford stacks are semi-projective.

For $\theta\in M=N^\star$, let $\chi^{\theta}:
(\mathbb{C}^{*})^{m}\to \mathbb{C}^{*}$ be
the map induced by  $\theta\circ\beta:
\mathbb{Z}^{m}\to \mathbb{Z}$. Let $\xi_{\theta}\to B$ be the line bundle
$P\times_{\chi^{\theta}}\mathbb{C}$.  We introduce the deformed
ring $H^{*}(B)[N]^{\mathbf{\Sigma}}=H^{*}(B)\otimes
\mathbb{Q}[N]^{\mathbf{\Sigma}}$, where
$\mathbb{Q}[N]^{\mathbf{\Sigma}}:=\bigoplus_{c\in
N}\mathbb{Q}\cdot y^{c}$, $y$ is a formal variable, and $H^{*}(B)$ is the
cohomology ring of $B$. The multiplication of
$\mathbb{Q}[N]^{\mathbf{\Sigma}}$ is given by
\begin{equation}\Label{product}
y^{c_{1}}\cdot y^{c_{2}}:=\begin{cases}y^{c_{1}+c_{2}}&\text{if
there is a cone}~ \sigma\in\Sigma ~\text{such that}~ \overline{c}_{1}\in\sigma, \overline{c}_{2}\in\sigma\,,\\
0&\text{otherwise}\,.\end{cases}
\end{equation}
Let $\mathcal{I}(^{P}\mathbf{\Sigma})$ be the ideal in
$H^{*}(B)[N]^{\mathbf{\Sigma}}$ generated by the following elements:
\begin{equation}\Label{ideal-coh}
\left(c_{1}(\xi_{\theta})+\sum_{i=1}^{n}\theta(b_{i})y^{b_{i}}\right)_{\theta\in
M},
\end{equation}  and
$H_{CR}^{*}\left(^{P}\mathcal{X}(\mathbf{\Sigma})\right)$ 
the Chen-Ruan cohomology ring of the toric stack bundle $^{P}\mathcal{X}(\mathbf{\Sigma})$.

\begin{thm}[\cite{Jiang}]
Let $^{P}\mathcal{X}(\mathbf{\Sigma})\to B$ be a toric stack bundle over a smooth variety $B$ as above. We have an isomorphism of
$\mathbb{Q}$-graded rings:
$$H_{CR}^{*}\left(^{P}\mathcal{X}(\mathbf{\Sigma})\right)\cong \frac{H^{*}(B)[N]^{\mathbf{\Sigma}}}{\mathcal{I}(^{P}\mathbf{\Sigma})}.$$
\end{thm}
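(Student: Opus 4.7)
The plan is to adapt the strategy for ordinary toric Deligne-Mumford stacks (\cite{BCS}, generalized in \cite{JT}) to the bundle setting, with the Leray-Hirsch principle providing the bridge. First, I would use Proposition~\ref{r-inertia} to decompose the inertia stack additively as
$$\mathcal{I}\bigl(^{P}\mathcal{X}(\mathbf{\Sigma})\bigr)=\coprod_{v\in Box(\mathbf{\Sigma})}{}^{P}\mathcal{X}(\mathbf{\Sigma}/\sigma(\overline{v})),$$
so each twisted sector is itself a toric stack bundle over $B$ of the same type. Under the proposed isomorphism, the formal monomial $y^{v}$ for $v\in Box(\mathbf{\Sigma})$ is to correspond to the identity class in the twisted sector indexed by $v$, and more generally $y^{v+\sum a_{i}b_{i}}$ (with $\overline{v}+\sum a_{i}\overline{b}_{i}$ in a cone) corresponds to the product $y^{v}\cdot\prod y^{b_{i}}^{a_{i}}$ in that sector's cohomology.

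Next I would compute the ordinary cohomology of a toric stack bundle $^{P}\mathcal{X}(\mathbf{\Sigma})$ as an $H^{*}(B)$-algebra. The toric Deligne-Mumford stack $\mathcal{X}(\mathbf{\Sigma})$ has cohomology $\mathbb{Q}[y^{b_{1}},\ldots,y^{b_{n}}]$ modulo the Stanley-Reisner ideal (which is exactly the multiplicative rule~(\ref{product}) restricted to the untwisted sector) and the linear relations $\sum_{i}\theta(b_{i})y^{b_{i}}=0$ for $\theta\in M$. For the bundle, the line bundle $\mathcal{L}_{i}=P(L_{i})$ on $^{P}\mathcal{X}(\mathbf{\Sigma})$ restricts to $L_{i}$ on each fibre, so by a Leray-Hirsch argument (which applies since the fibre cohomology is freely generated by products of the $y^{b_{i}}$ as an $H^{*}(\mathrm{pt})$-module) the cohomology ring is generated by the classes $c_{1}(\mathcal{L}_{i})=y^{b_{i}}$ and pullbacks from $B$. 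The Stanley-Reisner relations are unchanged, but the linear relations are twisted by $c_{1}(\xi_{\theta})$ because $\sum_{i}\theta(b_{i})\,c_{1}(\mathcal{L}_{i})=\chi^{\theta}(\prod\mathcal{L}_{i})=-c_{1}(\xi_{\theta})$ (the sign from the character of the principal bundle transition). This is exactly the relation~(\ref{ideal-coh}). Applying the same calculation sector-by-sector gives the additive identification with $H^{*}(B)[N]^{\mathbf{\Sigma}}/\mathcal{I}(^{P}\mathbf{\Sigma})$.

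Finally I would verify that the Chen-Ruan product matches the formal product~(\ref{product}). For classes $y^{c_{1}}, y^{c_{2}}$ lying in twisted sectors indexed by $v_{1},v_{2}$, the Chen-Ruan product is computed on the 3-pointed moduli space of constant maps to the inertia stack, with an obstruction-bundle correction. The crucial observation is that for a toric stack bundle the relevant obstruction bundle is the pullback of the obstruction bundle for the fibre $\mathcal{X}(\mathbf{\Sigma})$ along the projection $^{P}\mathcal{X}(\mathbf{\Sigma}/\sigma)\to \mathcal{X}(\mathbf{\Sigma}/\sigma)$, since the bundle structure is principal and trivial on $G$-representations. Hence the product is zero when $\overline{c}_{1},\overline{c}_{2}$ do not share a cone (the corresponding component of the 3-pointed space is empty) and equals $y^{c_{1}+c_{2}}$ otherwise, matching~(\ref{product}).

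The main obstacle is this last step: verifying that the obstruction bundle computation of \cite{BCS} (or \cite{JT} in the semi-projective case) pulls back cleanly in the bundle setting, so that no extra $H^{*}(B)$-classes appear in the Chen-Ruan product. Concretely, one must check that the three-twisted-sector fibre product $\mathcal{X}(\mathbf{\Sigma}/\sigma(\overline{v}_{1}))\times^{\mathrm{stk}}\mathcal{X}(\mathbf{\Sigma}/\sigma(\overline{v}_{2}))\times^{\mathrm{stk}}\mathcal{X}(\mathbf{\Sigma}/\sigma(\overline{v}_{3}))$ and its obstruction bundle lift to the bundle situation via $P\times_{(\mathbb{C}^{*})^{m}}(-)$ without introducing any twist, so the fibrewise Chen-Ruan computation of BCS applies unchanged after pulling back the divisor classes $y^{b_{i}}$ to the bundle.
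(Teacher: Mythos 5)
Note first that the paper you are reading does not prove this theorem: it is imported from \cite{Jiang} (the attribution in the theorem header), and the surrounding text only records its consequences, namely the sector decomposition (\ref{decomposition}) and the Stanley--Reisner description (\ref{ring-twisted-sector}) of each sector. Your outline is essentially the argument of that reference: decompose the inertia stack via Proposition \ref{r-inertia}, compute each twisted sector as an $H^{*}(B)$-algebra by Leray--Hirsch (the fibre cohomology is generated by restrictions of the global classes $c_{1}(\mathcal{L}_{i})$, following \cite{SU}), obtain the twisted linear relations (\ref{ideal-coh}) from $\prod_{j}\mathcal{L}_{j}^{\langle\theta,b_{j}\rangle}\cong\xi_{\theta}^{\vee}$, and then match the Chen--Ruan product with the formal rule (\ref{product}) by an obstruction-bundle computation. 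The additive and untwisted-sector parts of your plan are sound.

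The step that fails as written is your mechanism for the obstruction bundle. You claim it is ``the pullback of the obstruction bundle for the fibre along the projection $^{P}\mathcal{X}(\mathbf{\Sigma}/\sigma)\to\mathcal{X}(\mathbf{\Sigma}/\sigma)$,'' so that ``no extra $H^{*}(B)$-classes appear.'' There is no such projection unless $P$ is trivial: the twisted sector fibres over $B$ with fibre $\mathcal{X}(\mathbf{\Sigma}/\sigma)$ and admits no map to that fibre. Moreover the obstruction bundle genuinely is twisted; it equals $\bigoplus_{a_{i}=2}\mathcal{L}_{i}$ restricted to the $3$-sector, where $\mathcal{L}_{i}=P(L_{i})$ are the twisted line bundles on the total space, whose Chern classes do interact with $H^{*}(B)$ through the relations (\ref{ideal-coh}). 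What rescues the product computation is not triviality of the twist but Proposition \ref{obstructionbdle} (from \cite{CH}, \cite{JKK}): the Euler class is still the combinatorial expression $\prod_{a_{i}=2}c_{1}(\mathcal{L}_{i})=\prod_{a_{i}=2}y^{b_{i}}$, with $y^{b_{i}}$ now denoting $c_{1}(\mathcal{L}_{i})$ rather than a fibre class. Once you replace your ``pullback'' assertion with this identity, the verification that the Chen--Ruan product agrees with (\ref{product}) proceeds exactly as in \cite{BCS} and \cite{JT}, and the proof closes.
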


From the definition of Chen-Ruan cohomology ring, we have 
\begin{equation}\label{decomposition}
H_{CR}^{*}\left(^{P}\mathcal{X}(\mathbf{\Sigma})\right)=\bigoplus_{v\in Box(\mathbf{\Sigma})}H^{*}\left(^{P}\mathcal{X}(\mathbf{\Sigma/\sigma}(\overline{v}))\right) 
\end{equation}
The closed substack $^{P}\mathcal{X}(\mathbf{\Sigma/\sigma}(\overline{v}))$ is also a toric stack bundle over $B$ with fibre being the toric Deligne-Mumford stack $\mathcal{X}(\mathbf{\Sigma}/\sigma(\overline{v}))$ associated to the quotient stacky fan 
$\mathbf{\Sigma}/\sigma(\overline{v})$.  
Let 
$$link(\sigma(\overline{v}))=\{\rho_1,\cdots,\rho_l\}.$$
Let 
$I_{\mathbf{\Sigma}/\sigma(\overline{v})}$ be the ideal of $H^{*}(B)[y^{\tilde{b}_{1}},\cdots,y^{\tilde{b}_{l}}]$ generated by 
$$\{y^{\tilde{b}_{i_{1}}}\cdots y^{\tilde{b}_{i_{k}}}| \rho_{i_{1}},\cdots, \rho_{i_{k}} ~\mbox{do not span a cone in}~ \Sigma/\sigma(\overline{v})\}.$$
Then the cohomology ring of $^{P}\mathcal{X}(\mathbf{\Sigma/\sigma}(\overline{v}))$ is 
isomorphic to the Stanley-Reisner ring of the quotient fan
over the cohomology ring $H^{*}(B)$ of the base $B$:
\begin{equation}\label{ring-twisted-sector}
H^{*}(^{P}\mathcal{X}(\mathbf{\Sigma/\sigma}(\overline{v})))\cong
\frac{H^{*}(B)[y^{\tilde{b}_{1}},\cdots,y^{\tilde{b}_{l}}]}{I_{\mathbf{\Sigma}/\sigma(\overline{v})}+\mathcal{I}(^{P}\mathbf{\Sigma}/\sigma(\overline{v}))}.
\end{equation}

\begin{rmk}
As pointed out in \cite{BH}, the Chen-Ruan cohomology ring $H_{CR}^{*}\left(^{P}\mathcal{X}(\mathbf{\Sigma})\right)$ is not $Artinian$ in general if $N$ has torsion, since it has degree zero elements.  If $N$ is free, i.e. the toric Deligne-Mumford stack is reduced, then $H_{CR}^{*}\left(^{P}\mathcal{X}(\mathbf{\Sigma})\right)$  is an Artinian module 
over the cohomology ring $H^{*}(B)$ of the base.
\end{rmk}

\subsection{Obstruction Bundle}

The key gradient of Chen-Ruan orbifold cup product is the orbifold obstruction bundle 
defined over the double inertia stacks. We review it here for the latter use.

The stack $^{P}\mathcal{X}(\mathbf{\Sigma})$ is an {\em abelian} Deligne-Mumford stack,
i.e. the local groups are all abelian groups.  
The 3-twisted sector sectors of $^{P}\mathcal{X}(\mathbf{\Sigma})$ are given by 
triples $(v_1,v_2,v_3)$ for $v_1,v_2,v_3\in Box(\mathbf{\Sigma})$ such that 
$v_1+v_2+v_3$ belongs to $N$.

For any
3-twisted sector
$^{P}\mathcal{X}(\mathbf{\Sigma}/(v_1,v_2,v_3))$,
the normal bundle
$N(^{P}\mathcal{X}(\mathbf{\Sigma}/(v_1,v_2,v_3))/^{P}\mathcal{X}(\mathbf{\Sigma}))$
splits into the direct sum of line bundles under the
group action. It follows from the definition that if
$\overline{v}=\sum_{\rho_{i}\subseteq\sigma(v_1,v_2,v_3)}\alpha_{i}\overline{b}_{i}$,
then the action of $v$ on the normal bundle
$N(^{P}\mathcal{X}(\mathbf{\Sigma}/(v_1,v_2,v_3))/^{P}\mathcal{X}(\mathbf{\Sigma}))$
is given by the diagonal matrix $diag(\alpha_{i})$. 
Let $e: ~^{P}\mathcal{X}(\mathbf{\Sigma}/(v_1,v_2,v_3))\to ~^{P}\mathcal{X}(\mathbf{\Sigma})$
be the embedding. According to \cite{CR1} the obstruction bundle $Ob_{(v_1,v_2,v_3)}$ over $\mathcal{X}(\mathbf{\Sigma}/(v_1,v_2,v_3))$ is defined as
$$Ob_{(v_1,v_2,v_3)}:=(H^{1}(\mathcal{C}, \mathcal{O}_{\mathcal{C}})\otimes e^{*}T_{^{P}\mathcal{X}(\mathbf{\Sigma})})^{\langle v_1,v_2,v_3\rangle},$$
where $\langle v_1,v_2,v_3\rangle$ is the subgroup generated by 
$v_1,v_2,v_3$ and $\mathcal{C}$ is the $\langle v_1,v_2,v_3\rangle$-cover
over the Riemann sphere $\mathbb{P}^1$. Details can be found in \cite{CR1}. 
Let $v_1+v_2+v_3=\sum_{\rho_i\subset \sigma(\overline{v}_1,\overline{v}_2,\overline{v}_3)}a_ib_i$.
We will use the following description of the Euler class of the obstruction bundle:

\begin{prop}[see \cite{CH}, \cite{JKK}]\label{obstructionbdle}
Let $^{P}\mathcal{X}(\mathbf{\Sigma}/(v_1,v_2,v_3))$ 
be a 3-twisted sector of the stack  $^{P}\mathcal{X}(\mathbf{\Sigma})$  such that 
$v_{1},v_{2},v_{3}\neq 0$.
Then the Euler class of the obstruction bundle
$Ob_{(v_{1},v_{2},v_{3})}$ is
\begin{equation}\label{obstruction-bundle}
Ob_{(v_{1},v_{2},v_{3})}=\prod_{a_{i}=2}c_{1}(\mathcal{L}_{i})|_{\mathcal{X}(\mathbf{\Sigma}/(\overline{v}_{1},\overline{v}_{2},\overline{v}_{3}))},
\end{equation}
where $\mathcal{L}_{i}$ is the line bundle over $^{P}\mathcal{X}(\mathbf{\Sigma})$  determined by the ray  $\rho_{i}$.  
\end{prop}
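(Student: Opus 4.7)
The plan is to apply the general character formula for obstruction bundles on abelian Deligne--Mumford stacks, due to Chen--Hu \cite{CH} and Jarvis--Kaufmann--Kimura \cite{JKK}, combined with the toric Euler sequence lifted to the stack bundle ${}^{P}\mathcal{X}(\mathbf{\Sigma})$. The argument parallels the computation carried out for a single toric Deligne--Mumford stack in \cite{BCS}, with the extra observation that the tangent directions coming from the base $B$ are acted on trivially by the local group.

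First, I would decompose $T_{{}^{P}\mathcal{X}(\mathbf{\Sigma})}$, restricted to the $3$-twisted sector ${}^{P}\mathcal{X}(\mathbf{\Sigma}/(v_1,v_2,v_3))$, into isotypic components under the local group $\langle v_1,v_2,v_3\rangle$. Writing $\pi\colon {}^{P}\mathcal{X}(\mathbf{\Sigma})\to B$ for the projection, I would combine the exact sequence
$$0\longrightarrow T_{{}^{P}\mathcal{X}(\mathbf{\Sigma})/B}\longrightarrow T_{{}^{P}\mathcal{X}(\mathbf{\Sigma})}\longrightarrow \pi^{*}T_B\longrightarrow 0$$
with the toric Euler sequence that realizes $T_{{}^{P}\mathcal{X}(\mathbf{\Sigma})/B}$ as the quotient of $\bigoplus_{i=1}^{n}\mathcal{L}_i$ by a rank-$d$ subbundle pulled back from $B$ (built from the line bundles $\xi_{u_j}^{\vee}$). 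Both $\pi^{*}T_B$ and the kernel of the Euler surjection are pulled back from $B$ and therefore carry only the trivial character of $\langle v_1,v_2,v_3\rangle$. The non-trivial isotypic pieces come entirely from the summands $\mathcal{L}_i$ attached to rays $\rho_i\subseteq \sigma(\overline{v}_1,\overline{v}_2,\overline{v}_3)$: if $\overline{v}_j=\sum_i\alpha_{i,j}\overline{b}_i$ with $0\le\alpha_{i,j}<1$, then $v_j$ acts on $\mathcal{L}_i$ restricted to the $3$-twisted sector by the character $e^{2\pi i\alpha_{i,j}}$, and trivially for rays outside the cone.

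Next, I would apply the formula of \cite{CH,JKK}: for a character $\chi$ of the local group with $\chi(v_j)=e^{2\pi i s_j}$ and $s_j\in[0,1)$, the $\chi$-eigenspace of $H^{1}(\mathcal{C},\mathcal{O}_{\mathcal{C}})$ has dimension $s_1+s_2+s_3-1$ whenever $s_1+s_2+s_3\in\{1,2\}$, and is zero otherwise. For the character associated with the ray $\rho_i$ the sum $\alpha_{i,1}+\alpha_{i,2}+\alpha_{i,3}$ is precisely the integer $a_i$ appearing in the statement. Hence rays with $a_i=0$ contribute nothing, rays with $a_i=1$ yield rank-zero summands, and each ray with $a_i=2$ contributes exactly one copy of $\mathcal{L}_i$ restricted to the $3$-twisted sector. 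Taking Euler classes of this direct sum produces the claimed product.

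The main obstacle is the first step: verifying that on the stack bundle the tangent bundle admits an equivariant Euler-type presentation of the above form, and carefully confirming that the base-direction summands $\pi^{*}T_B$ and the pulled-back kernel subbundle are fixed by $\langle v_1,v_2,v_3\rangle$. Once this bookkeeping is complete, what remains is a direct application of the age formula, exactly as in the toric Deligne--Mumford case treated in \cite{BCS}.
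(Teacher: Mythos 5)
Your argument is correct and is essentially the proof the paper delegates to \cite{CH} and \cite{JKK}: the proposition is stated there without proof, but the preceding discussion already records the splitting of the normal bundle into the line bundles $\mathcal{L}_i$ with $\rho_i\subseteq\sigma(\overline{v}_1,\overline{v}_2,\overline{v}_3)$ and the diagonal action by $\mathrm{diag}(\alpha_i)$, and your application of the Chevalley--Weil/age formula to the characters with $\alpha_{i,1}+\alpha_{i,2}+\alpha_{i,3}=a_i$ (so that $a_i=2$ contributes one copy of $\mathcal{L}_i$ and $a_i\in\{0,1\}$ contributes nothing) is exactly how those references and \cite{BCS} obtain the Euler class $\prod_{a_i=2}c_1(\mathcal{L}_i)$. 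One cosmetic slip: the kernel of the Euler-type surjection $\bigoplus_{i=1}^{n}\mathcal{L}_i\to T_{{}^{P}\mathcal{X}(\mathbf{\Sigma})/B}$ has rank $n-d$, not $d$, though all that matters for your bookkeeping is that it, like $\pi^{*}T_B$, is pulled back from $B$ and hence carries only the trivial character of $\langle v_1,v_2,v_3\rangle$.
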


\section{The $K$-Theory of Toric Stack Bundles}\Label{k-theory}
In this section we study the Grothendieck ring of toric stack bundles and prove the 
main theorem.

\subsection{The $K$-Theory of Toric Deligne-Mumford Stacks}
We recall the result of \cite{BH}. Let $\mathbf{\Sigma}$ be a stacky fan and $\mathcal{X}(\mathbf{\Sigma})$ the corresponding
toric Deligne-Mumford stack. 
For each ray  $\rho_{i}$ in the fan $\Sigma$, define the  line bundle $L_{i}$ over
$\mathcal{X}(\mathbf{\Sigma})$ to be the quotient of the
trivial line bundle $Z\times \mathbb{C}$ over $Z$ under
the action of $G$ on $\mathbb{C}$ through $i$-th component of
$\alpha$ in (\ref{stack}).  Let $x_i$ represent the class $[L_i]$ in the 
Grothendieck $K$-theory ring.

Let $R$ be the character ring of the group $G_{min}$.
Let $Cir(\mathbf{\Sigma})$ be the ideal in $K(B)\otimes R$
generated by the elements
\begin{equation}\label{ideal-toric}
\left(\prod_{1\leq j\leq n}x_{j}^{\langle\theta,v_{j}\rangle}-1\right)_{\theta\in
M}.
\end{equation} 
Let $I_{\mathbf{\Sigma}}$ be the ideal generated by
\begin{equation}\label{ideal2-toric}
\prod_{i\in I}(1-x_i)=0,
\end{equation}
where $I\subseteq[1,\cdots,n]$ such that $\{\rho_i| i\in I\}$ do not form a cone 
in $\Sigma$.
According to \cite{BH}, the Grothendieck $K$-theory ring $K_0(\mathcal{X}(\mathbf{\Sigma}))$ of $\mathcal{X}(\mathbf{\Sigma})$  can be described as follows.

\begin{thm}[\cite{BH}]\label{toric-stack}
For a toric Deligne-Mumford stack  $\mathcal{X}(\mathbf{\Sigma})$, the morphism
$$\phi:  \frac{R}{I_{\mathbf{\Sigma}}+Cir(\mathbf{\Sigma})}\longrightarrow K_{0}(\mathcal{X}(\mathbf{\Sigma})),$$
which send $\chi$ to $[L_{\chi}]$, is an isomorphism.
\end{thm}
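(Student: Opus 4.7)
The strategy is to identify $K_0(\mathcal{X}(\mathbf{\Sigma}))$ with an equivariant $K$-group and to read off the desired presentation from the localization sequence combined with Koszul resolutions of coordinate subspaces lying in the irrelevant locus; injectivity is then verified by induction along the orbit stratification of $U_\Sigma:=\mathbb{C}^{n}\setminus \mathbb{V}(J_\Sigma)$.

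By Proposition~\ref{newstack} we may assume $\mathbf{\Sigma}=\mathbf{\Sigma_{min}}$, so that $\mathcal{X}(\mathbf{\Sigma})=[U_\Sigma/G_{min}]$ with $G_{min}\hookrightarrow (\mathbb{C}^{*})^{n}$ the subgroup determined by~(\ref{stack}). Then $K_0(\mathcal{X}(\mathbf{\Sigma}))=K_0^{G_{min}}(U_\Sigma)$, and each $L_i$ is the restriction to $G_{min}$ of the $i$-th standard character of $(\mathbb{C}^{*})^{n}$. Since $\mathbb{C}^{n}$ is $G_{min}$-equivariantly contractible to the origin, $K_0^{G_{min}}(\mathbb{C}^{n})=R$, and the localization exact sequence for the open immersion $U_\Sigma\hookrightarrow \mathbb{C}^{n}$ yields a surjection $R\twoheadrightarrow K_0^{G_{min}}(U_\Sigma)$. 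The $Cir(\mathbf{\Sigma})$ relations are automatic in $R=R(G_{min})$, because the generators $\prod_{j}x_{j}^{\langle\theta,v_{j}\rangle}$ are characters of $(\mathbb{C}^{*})^{n}$ pulled back along $(\mathbb{C}^{*})^{n}\to T$ from $\chi^{\theta}$, and hence restrict trivially to $G_{min}$.

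Next, for any $I\subseteq\{1,\ldots,n\}$ whose rays do not span a cone in $\Sigma$, the linear coordinate subspace $V_I\subset\mathbb{C}^{n}$ cut out by $\{x_{i}=0:i\in I\}$ is contained in $\mathbb{V}(J_\Sigma)$. Its $G_{min}$-equivariant Koszul resolution computes
$$[\mathcal{O}_{V_I}]=\prod_{i\in I}(1-x_{i})\in R=K_0^{G_{min}}(\mathbb{C}^{n}),$$
and this class restricts to zero in $K_0^{G_{min}}(U_\Sigma)$. Together with the previous step this produces a well-defined surjection
$$\phi: R/(I_{\mathbf{\Sigma}}+Cir(\mathbf{\Sigma}))\twoheadrightarrow K_0(\mathcal{X}(\mathbf{\Sigma})),\quad \chi\mapsto[L_\chi].$$

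The main obstacle is proving injectivity of $\phi$. My plan is an induction along a filtration of $U_\Sigma$ by closed unions of $(\mathbb{C}^{*})^{n}$-orbit closures. Each $(\mathbb{C}^{*})^{n}$-orbit on $U_\Sigma$ corresponds to a cone $\sigma\in\Sigma$ and its closure is cut out by $\{x_{i}=0:\rho_{i}\subset\sigma\}$. Repeated use of the equivariant localization exact sequence along this filtration identifies a generating set of $K_0^{G_{min}}(U_\Sigma)$ indexed by cones, and the Koszul relations $\prod_{i\in I}(1-x_{i})$ account exactly for the overlap conditions among these generators; matching them with the classes coming from the quotient ring then gives the desired isomorphism. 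The delicate point is the $G_{min}$-equivariant bookkeeping, particularly the contribution of the torsion subgroup $\mu$ appearing in~(\ref{stack}): one must work with the Gale dual presentation $R\cong \mathbb{Z}[DG(\beta_{min})]$ of the character ring rather than a polynomial ring, tracking characters through the Gale dual sequence in order to match ranks and torsion on both sides of $\phi$.
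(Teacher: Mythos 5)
The paper does not prove this statement at all: Theorem~\ref{toric-stack} is quoted from Borisov--Horja \cite{BH} and used as a black box, so there is no internal proof to compare against. Your reconstruction follows what is essentially the standard (and the cited) strategy: identify $K_0(\mathcal{X}(\mathbf{\Sigma}))$ with $K_0^{G_{min}}(U_\Sigma)$, use homotopy invariance to get $K_0^{G_{min}}(\mathbb{C}^n)=R$, the localization sequence for surjectivity, and Koszul resolutions of the coordinate subspaces $V_I\subset\mathbb{V}(J_\Sigma)$ to see that $I_{\mathbf{\Sigma}}$ dies. That half is correct and complete (your observation that the circuit relations already hold in $R=\mathbb{Z}[DG(\beta_{min})]$, because $\theta\in M$ maps to $\sum_j\langle\theta,b_j\rangle e_j\in\ker(\beta_{min}^\vee)$, is also right and simply means that, in the paper's formulation, $Cir(\mathbf{\Sigma})$ is a redundant ideal).

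The gap is injectivity, which you describe as a plan rather than execute. Two things are missing. First, to show that the kernel of $R\twoheadrightarrow K_0^{G_{min}}(U_\Sigma)$ is \emph{exactly} $I_{\mathbf{\Sigma}}$ you need the d\'evissage statement that $G_0^{G_{min}}(\mathbb{V}(J_\Sigma))$ is generated by pushforwards from the linear strata $V_I$ (for $I$ running over the minimal non-faces); only then does the projection formula identify the image in $R$ with the ideal generated by the Koszul classes $\prod_{i\in I}(1-x_i)$. Your alternative phrasing --- an induction over orbit closures \emph{inside} $U_\Sigma$ --- runs into the standard problem that localization sequences only give right-exactness, so ``the Koszul relations account exactly for the overlap conditions'' is precisely the assertion to be proved, not a consequence of the bookkeeping. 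Second, the torsion contribution you flag at the end (the subgroup $\mu$, equivalently the fact that $\beta_{min}^\vee$ need not be surjective so the $x_i$ need not generate $R$) is exactly where a naive rank count fails; one must either split the computation over the components of the inertia (as in Vezzosi--Vistoli) or compare with the known module decomposition indexed by $Box(\mathbf{\Sigma})$. As it stands the proposal establishes a well-defined surjection $\phi$ but not the isomorphism.
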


Let $\mathbf{\Sigma_{min}}$ be the minimal stacky fan associated to $\mathbf{\Sigma}$. 
There is  an underlying {\em reduced} stacky
fan $\mathbf{\Sigma_{red}}=(\overline{N},\Sigma,\overline{\beta})$,
where $\overline{N}=N/N_{tor}$, $\overline{\beta}:
\mathbb{Z}^{n}\to \overline{N}$ is the natural projection given by the vectors
$\{\overline{b}_{1},\cdots,\overline{b}_n\}\subseteq \overline{N}$.
 Consider the following diagram
$$
\xymatrix{
~\mathbb{Z}^{n}\rto^{\beta}\dto_{id}&N\dto{}\\
~\mathbb{Z}^{n}\rto^{\overline{\beta}}&~\overline{N}.}
$$
Taking Gale duals yields
\begin{equation}\label{diagram-section4}
\begin{CD}
0 @ >>>\overline{N}^{\star}@ >>> \mathbb{Z}^{n}@ >{\overline{\beta}^{\vee}}>>
DG(\overline{\beta})@
>>>0@>>> 0\\
&& @VV{}V@VV{}V@VV{\varphi}V@V{}VV \\
0@ >>> N^{\star} @ >{}>>\mathbb{Z}^{n}@ >{\beta^{\vee}}>> DG(\beta)@>>>coker(\beta^{\vee})@>>>0.
\end{CD}
\end{equation}
Applying $\text{Hom}_{\mathbb{Z}}(-,\mathbb{C}^{*})$ to (\ref{diagram-section4}) yields
\begin{equation}\label{diagram-section4-2}
\begin{CD}
1 @ >>>\mu@ >>> G@ >{\alpha}>>
(\mathbb{C}^{*})^{n} @
>>>T@>>> 1\\
&& @VV{}V@VV{\alpha(\varphi)}V@VV{}V@V{}VV \\
1@ >>> 1 @ >{}>>\overline{G}@ >{\overline{\alpha}}>> (\mathbb{C}^{*})^{n}
@>>>T@>>> 1,
\end{CD}
\end{equation}

The stack $\mathcal{X}(\mathbf{\Sigma_{red}})$
is a toric orbifold. By construction 
$\mathcal{X}(\mathbf{\Sigma_{red}})=[Z/\overline{G}]$, where
$\overline{G}=\text{Hom}_{\mathbb{Z}}(DG(\overline{\beta}),\mathbb{C}^{*})$
and $DG(\overline{\beta})$ is the Gale dual 
$\overline{\beta}^{\vee}: \mathbb{Z}^{n}\to \overline{N}^{\vee}$
of the map $\overline{\beta}$. 
We can see from (\ref{diagram-section4-2}) that every character of $\overline{G}$ 
can be represented as a character of $(\mathbb{C}^{*})^{n}$.  So we have:
 
\begin{thm}\label{toric-stack-red}
For the reduced toric Deligne-Mumford stack  $\mathcal{X}(\mathbf{\Sigma_{red}})$ the morphism
$$\phi:  \frac{\mathbb{Z}[x_1,x_1^{-1},\cdots,x_n,x_n^{-1}]}{I_{\mathbf{\Sigma}}+Cir(\mathbf{\Sigma})}\longrightarrow K_{0}(\mathcal{X}(\mathbf{\Sigma_{red}})),$$
which send $x_{i}$ to $[L_{i}]$, is an isomorphism.
\end{thm}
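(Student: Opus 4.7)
The plan is to deduce Theorem~\ref{toric-stack-red} as a direct specialization of Theorem~\ref{toric-stack} to the reduced setting, with the additional observation that the character ring of $\overline{G}$ admits a concrete presentation as a Laurent polynomial ring modulo the ideal $Cir(\mathbf{\Sigma})$. Concretely, since $N$ is torsion free, $\mathbf{\Sigma}_{\min}=\mathbf{\Sigma}_{\mathrm{red}}$ as stacky fans, so Theorem~\ref{toric-stack} already gives an isomorphism
\[
\phi:\ \frac{R}{I_{\mathbf{\Sigma}}+Cir(\mathbf{\Sigma})}\ \longrightarrow\ K_{0}(\mathcal{X}(\mathbf{\Sigma}_{\mathrm{red}})),
\]
where $R$ is the representation ring of $\overline{G}=\mathrm{Hom}_{\mathbb{Z}}(DG(\overline{\beta}),\mathbb{C}^{*})$. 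The task is therefore to identify $R$ with $\mathbb{Z}[x_{1}^{\pm 1},\dots,x_{n}^{\pm 1}]/Cir(\mathbf{\Sigma})$.

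To do this, I would exploit the bottom row of diagram~(\ref{diagram-section4-2}). Because $\mu=1$ in the reduced case, this row is the short exact sequence $1\to\overline{G}\xrightarrow{\overline{\alpha}}(\mathbb{C}^{*})^{n}\to T\to 1$, which on character lattices dualizes to an exact sequence
\[
0\ \longrightarrow\ M\ \longrightarrow\ \mathbb{Z}^{n}\ \xrightarrow{\overline{\beta}^{\vee}}\ DG(\overline{\beta})\ \longrightarrow\ 0.
\]
Applying $\mathbb{Z}[-]$ turns this into the presentation $R\cong\mathbb{Z}[x_{1}^{\pm 1},\dots,x_{n}^{\pm 1}]/J$, where $x_{j}=\overline{\beta}^{\vee}(e_{j})$ corresponds to the class of the line bundle $L_{j}$, and $J$ is generated by the multiplicative relations coming from the image of $M$ in $\mathbb{Z}^{n}$. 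Because the map $M\to\mathbb{Z}^{n}$ sends $\theta$ to the tuple $(\langle\theta,b_{j}\rangle)_{j=1}^{n}$, the relations are exactly $\prod_{j=1}^{n}x_{j}^{\langle\theta,b_{j}\rangle}-1$ for $\theta\in M$, which is precisely $Cir(\mathbf{\Sigma})$.

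Combining the two isomorphisms gives
\[
K_{0}(\mathcal{X}(\mathbf{\Sigma}_{\mathrm{red}}))\ \cong\ \frac{R}{I_{\mathbf{\Sigma}}+Cir(\mathbf{\Sigma})}\ \cong\ \frac{\mathbb{Z}[x_{1}^{\pm 1},\dots,x_{n}^{\pm 1}]}{I_{\mathbf{\Sigma}}+Cir(\mathbf{\Sigma})},
\]
and a quick check shows that the composite map sends $x_{i}$ to $[L_{i}]$, as required. The argument is essentially formal once Theorem~\ref{toric-stack} is in hand; the only real content is the lifting statement that every character of $\overline{G}$ extends to $(\mathbb{C}^{*})^{n}$, which fails when $N$ has torsion (the obstruction being encoded by $\mu=\mathrm{Hom}(\mathrm{coker}(\beta^{\vee}),\mathbb{C}^{*})$ in the top row of (\ref{diagram-section4-2})). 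Hence there is no serious obstacle in the proof beyond tracking this identification carefully.
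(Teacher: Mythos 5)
Your argument is correct and is essentially the paper's own: the paper likewise deduces Theorem~\ref{toric-stack-red} from Theorem~\ref{toric-stack} by observing from diagram~(\ref{diagram-section4-2}) that in the reduced case every character of $\overline{G}$ lifts to $(\mathbb{C}^{*})^{n}$. You merely spell out more explicitly what the paper leaves implicit, namely that dualizing $1\to\overline{G}\to(\mathbb{C}^{*})^{n}\to T\to 1$ identifies the character ring of $\overline{G}$ with $\mathbb{Z}[x_{1}^{\pm1},\dots,x_{n}^{\pm1}]/Cir(\mathbf{\Sigma})$.
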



\subsection{Proof of Theorem \ref{main}}
Let $\mathbf{\Sigma}$ be a stacky fan, and $\mathcal{X}(\mathbf{\Sigma})$ the associated 
toric Deligne-Mumford stack. Let $P\to B$ be a principle
$(\mathbb{C}^{*})^{m}$-bundle over the smooth variety $B$. Then we have the 
toric stack bundle $\pi: ~^{P}\mathcal{X}(\mathbf{\Sigma})\to B$.
For each ray $\rho_i$ in the fan $\Sigma$, we have a line bundle $L_i$ over $\mathcal{X}(\mathbf{\Sigma})$.
Twist it by the principal
$(\mathbb{C}^{*})^{m}$-bundle $P$, we get the line bundle
$\mathcal{L}_{i}$ over the toric stack bundle
$^{P}\mathcal{X}(\mathbf{\Sigma})$.

As in \cite{BCS} and \cite{Jiang}  we have a codimension one  closed substack  $\mathcal{X}(\mathbf{\Sigma}/\rho_j)\subset \mathcal{X}(\mathbf{\Sigma})$. There is a canonical section $s_j$ of the line bundle $L_{j}$ whose zero locus is $\mathcal{X}(\mathbf{\Sigma}/\rho_j)$.

Suppose that $\rho_{j_{1}},\cdots,\rho_{j_{r}}$ do not span a cone in $\Sigma$.
The section $s=(s_{j_{1}},\cdots,s_{j_{r}})$ of 
$L_{j_{1}}\oplus\cdots\oplus L_{j_{r}}$ is nowhere vanishing and extends to a nowhere vanishing section
$$P(s): ~^{P}\mathcal{X}(\mathbf{\Sigma})\longrightarrow \mathcal{L}_{j_{1}}\oplus\cdots\oplus \mathcal{L}_{j_{r}}$$
after twisting by the principle $(\mathbb{C}^{*})^{m}$-bundle $P$.
Hence by Remark 4.4 in \cite{SU},
\begin{equation}\label{relation1}
\prod_{1\leq p\leq r}(1-\mathcal{L}_{j_{p}})=0.
\end{equation}

For any $\theta\in M$, the $P$-equivariant isomorphism of bundles over $\mathcal{X}(\mathbf{\Sigma})$
$$\prod_{1\leq j\leq n}L_{j}^{\langle\theta,b_{j}\rangle}\cong L_{\theta}$$
yields an isomorphism of bundles over $^{P}\mathcal{X}(\mathbf{\Sigma})$,
$$\prod_{1\leq j\leq n}\mathcal{L}_{j}^{\langle\theta,b_{j}\rangle}\cong  \mathcal{L}_{\theta}.$$
Since $\mathcal{L}_{\theta}=\prod_{1\leq i\leq d}\xi_{i}^{-\langle\theta,v_{i}\rangle}$,
we obtain
\begin{equation}\label{relation2}
\prod_{1\leq j\leq n}\mathcal{L}_{j}^{\langle\theta,b_{j}\rangle}\cong \xi_{\theta}^{\vee}, \quad \text{where } \xi_{\theta}=\prod_{1\leq i\leq d}\xi_{i}^{\langle\theta,v_{i}\rangle}.
\end{equation}

Consider the following map 
$$\varphi:  \frac{K(B)\otimes R}{I_{\mathbf{\Sigma}}+C(^{P}\mathbf{\Sigma})}\longrightarrow K_{0}(^{P}\mathcal{X}(\mathbf{\Sigma})),\quad b\otimes \chi\mapsto [\pi^{*}b\otimes \mathcal{L}_{\chi}],\quad b\in K(B), \chi\in R.$$
We prove that $\varphi$ is surjective by induction on the dimension of $B$. 
It is obvious when $B$ is a point.  Let $U\subset B$ be a Zariski open subset and $Z=B\setminus U$. Consider the following diagram with exact rows (see \cite{toen}, Section 3.1 for the exactness of the bottom row):
\begin{equation}\label{diagram-ktheory}
\begin{CD}
K_0(Z)\otimes K(\mathcal{X}(\mathbf{\Sigma}))@ >>> K_0(B)\otimes K(\mathcal{X}(\mathbf{\Sigma}))@ >{}>>
K_0(U)\otimes K(\mathcal{X}(\mathbf{\Sigma})) \\
@VV{}V@VV{}V@VV{}V \\
K_0(\pi^{-1}Z)@ >{}>>K_0(^{P}\mathcal{X}(\mathbf{\Sigma}))@ >{}>> K_0(\pi^{-1}U),
\end{CD}
\end{equation}
where $\pi:  ~^{P}\mathcal{X}(\mathbf{\Sigma})\to B$
is the structure map.  By Lemma \ref{new-proof} below, the vertical map on the right of (\ref{diagram-ktheory}) is surjective.  
Then by induction the map
$\varphi$ is surjective since $dim(Z)< dim(B)$.

Now we prove that $\varphi$ is injective.  Let $\sum_{i=1}^{m}b_i[F_i]\in K(B)\otimes R$
such that 
$$\varphi\left(\sum_{i=1}^{m}b_i[F_i]\right)=\sum_{i=1}^{m}\pi^{*}b_i\otimes [\mathcal{F}_{i}]=0,$$  
where $\mathcal{F}_i$ is the twist of $F_i$ by the $(\mathbb{C}^{*})^{m}$-bundle $P$.
The sheaf  $\mathcal{F}_{i}$
is generated by $\mathcal{L}_{j}$'s corresponding to rays and the torsion line bundles 
corresponding to torsion subgroup in $G_{min}$.   From the relations in 
(\ref{relation1}) and (\ref{relation2}),  it is easy to see that if one of 
$b_i\neq 0$, then $\sum_{i=1}^{m}\pi^{*}b_i\otimes [\mathcal{F}_{i}]\neq 0$. 
So $\varphi$ is injective, hence is an isomorphism. The concludes the proof of Theorem \ref{main}.

\begin{lem}\label{new-proof}
Let $U$ be a smooth scheme. 
Let $[M/G]$ be a quotient stack, where $M$ admits a cellular decomposition (in the sense of \cite{SGA6}) which is $G$-equivariant. Then the map 
$$K_0(U)\otimes K_G(M)\longrightarrow K_G(U\times M)$$
is surjective.
\end{lem}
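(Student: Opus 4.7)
The plan is to prove surjectivity by induction on the number of cells in the $G$-equivariant cellular filtration of $M$, using the localization exact sequence in equivariant $K$-theory at each step, and reducing the single-cell base case to homotopy invariance.

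Fix a $G$-equivariant cellular filtration $\emptyset = M_0 \subset M_1 \subset \cdots \subset M_k = M$ in which each $M_{i}$ is closed and $G$-invariant in $M_{i+1}$, and each open stratum $C_i := M_i \setminus M_{i-1}$ is $G$-equivariantly isomorphic to an affine space with a linear $G$-action. The plan is to prove by induction on $i$ that the map
$$K_0(U) \otimes K_G(M_i) \longrightarrow K_G(U \times M_i)$$
is surjective, the case $i = 0$ being vacuous. For the inductive step, I would pair the localization sequence $K_G(M_{i-1}) \to K_G(M_i) \to K_G(C_i) \to 0$ attached to the closed immersion $M_{i-1} \hookrightarrow M_i$ with its analogue for $U \times M_i$, yielding a commutative ladder
\begin{equation*}
\begin{CD}
K_0(U) \otimes K_G(M_{i-1}) @>>> K_0(U) \otimes K_G(M_i) @>>> K_0(U) \otimes K_G(C_i) @>>> 0 \\
@VVV @VVV @VVV \\
K_G(U \times M_{i-1}) @>>> K_G(U \times M_i) @>>> K_G(U \times C_i) @>>> 0
\end{CD}
\end{equation*}
with right-exact rows (right-exactness of the top row uses that $K_0(U) \otimes -$ preserves right-exactness). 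By the right-exact five lemma, surjectivity of the middle vertical arrow follows once the outer two are surjective; the left one is surjective by induction, reducing everything to the single-cell base case.

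For a single $G$-equivariant affine cell $C$, homotopy invariance of equivariant $K$-theory yields $K_G(C) \cong K_G(\mathrm{pt}) = R(G)$ and similarly $K_G(U \times C) \cong K_G(U)$. Since $G$ acts trivially on $U$ and $G$ is diagonalizable in the present setting (an extension of a torus by a finite abelian group), every $G$-equivariant coherent sheaf on $U$ decomposes into $G$-weight components, giving $K_G(U) \cong K_0(U) \otimes R(G)$; under these identifications the map in question becomes the identity and is trivially surjective.

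The main obstacle will be the identification $K_G(U) \cong K_0(U) \otimes R(G)$ for the trivial $G$-action, which rests on $G$ being diagonalizable enough for the weight decomposition to exist; this holds in all applications in this paper. A secondary technical point is ensuring that the localization sequence is exact at the displayed terms when $M_i$ is possibly singular, which is handled by working with the Grothendieck group of $G$-equivariant coherent sheaves and using that $K = G$ on the smooth cells $C_i$.
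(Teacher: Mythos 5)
Your proof is correct and follows essentially the same route as the paper: the paper reduces the lemma to the cellular induction of \cite{SGA6} (Expos\'e 0, Proposition 2.13) supplemented by equivariant homotopy invariance and the right-exact equivariant localization sequence, which are exactly the ingredients you assemble explicitly via the filtration-plus-five-lemma ladder. Your spelled-out base case, identifying $K_G(U)\cong K_0(U)\otimes R(G)$ for the trivial action of a diagonalizable $G$, is the step the paper leaves implicit in ``adopting the arguments in \cite{SGA6}.''
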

\begin{proof}
This is an $G$-equivariant version of \cite{SGA6}, Expose 0, Proposition 2.13. This may be proven by adopting the arguments in \cite{SGA6}, together with the following claims.
\end{proof}
{\bf Claim 1.}
Let $X$ be a smooth scheme with trivial $G$-action, and $G$ acts on $\mathbb{A}^1$. Let $p: X\times \mathbb{A}^1\to X$ be the projection. Then the pull-back $p^*: K_G(X)\to K_G(X\times \mathbb{A}^1)$ is surjective.
\begin{proof}[Proof of Claim 1]
Let $V$ be a $G$-equivariant vector bundle over $X\times \mathbb{A}^1$. Then by the non-equivariant version of Claim 1 (see \cite{SGA6}, Expose 0, Proposition 2.9), there is a vector bundle $V'$ over $X$ such that $V=p^*(V')$. Since $G$ acts trivially on $X$, it is easy to see that the $G$-action on $V$ naturally yields a $G$-action on $V'$, making $p^*$ $G$-equivariant.
\end{proof}
{\bf Claim 2.} 
Let $X$ be a smooth $G$-scheme and $Y\subset X$ a smooth closed subscheme preserved by $G$-action. Suppose that the quotient $[X/G]$ is a noetherian Deligne-Mumford stack. Set $U:=X\setminus Y$. Then the natural sequence $$K_G(Y)\to K_G(X)\to K_G(U)\to 0$$ is exact.
\begin{proof}[Proof of Claim 2]
The exactness in the middle is a general fact, see e.g. \cite{toen}, Section 3.1. The surjectivity of the restriction map $K_G(X)\to K_G(U)$ follows from Claim 3 below (we interpret $G$-equivariant sheaves as sheaves on the quotient stacks).
\end{proof}
{\bf Claim 3.} Let $X$ and $U$ be as in Claim 2. Let $\mathcal{F}$ be a coherent sheaf on $[U/G]$. Then there exists a coherent sheaf $\mathcal{F'}$ on $[X/G]$ such that $\mathcal{F'}|_{[U/G]}=\mathcal{F}$.
\begin{proof}[Proof of Claim 3]
Define a {\em quasi-coherent} sheaf $\bar{\mathcal{F}}$ on $[X/G]$ as follows. For an open subset $V\subset [X/G]$ define $\bar{\mathcal{F}}(V):=\mathcal{F}(V\cap [U/G])$. By construction $\bar{\mathcal{F}}|_{[U/G]}=\mathcal{F}$, which is coherent. The Claim then follows from \cite{l-mb}, Corollaire 15.5. 
\end{proof}

\section{Combinatorial Chern Character}\Label{Chern-Character}

In this section we study the Chern character homomorphism from the $K$-theory
to Chen-Ruan cohomology. 
For simplicity, we assume that the toric Deligne-Mumofrd stack 
$\mathcal{X}(\mathbf{\Sigma})$ is reduced. 

In Section \ref{Module} we generalize two results in \cite{BH}, which give the module isomorphism 
of the Chern character.  In Section \ref{ring} we use the Chern character homomorphism in \cite{JKK}
to show that the Chern character is an ring isomorphism.

\subsection{The Module Chern Character}\Label{Module}
By Theorem \ref{main}, 
\begin{equation}\label{complex_K_ring}
K_0(^{P}\mathcal{X}(\mathbf{\Sigma}),\mathbb{C}):=K_0(^{P}\mathcal{X}(\mathbf{\Sigma}))\otimes_{\mathbb{Z}}\mathbb{C} \simeq \frac{K(B)\otimes R}{I_{\mathbf{\Sigma}}+C(^{P}\mathbf{\Sigma})}\otimes\mathbb{C},
\end{equation}
where $R\cong \mathbb{C}[x_1,x_1^{-1},\cdots,x_n,x_n^{-1}]$.
Let $\tilde{R}$ denote the right-hand side of (\ref{complex_K_ring}). Again let $[\xi_i]\in K(B,\mathbb{C}):=K(B)\otimes_\mathbb{Z} \mathbb{C}$ represent the class of $\xi_i$ in the $K$-theory of $B$. The following Lemma generalizes \cite{BH}, Lemma 5.1.
\begin{lem}\label{lem1}
The maximum ideals of $\tilde{R}$ as $K(B,\mathbb{C})$-algebras are in bijective correspondence with elements of $Box(\mathbf{\Sigma})$. A box element $v=\sum_{\rho_i\subset \sigma}a_i\overline{b}_i$ corresponds
to the $n$-tuple $(y_1,\cdots,y_n)\in K(B,\mathbb{C})^{n}$ such that 
$$
y_i=\begin{cases}
e^{2\pi i a_i}\sqrt[r_i]{\xi_i^{\vee}} & \mbox{if}~ \rho_i\subset \sigma,\\
1& otherwise,
\end{cases}
$$
where
$\xi_i\in K(B,\mathbb{C})$ and $r_i$ is the order of $e^{2\pi i a_i}$.
\end{lem}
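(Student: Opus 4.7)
The plan is to view $\tilde{R}$ as a $K(B,\mathbb{C})$-algebra and analyze its minimal primes, or equivalently the connected components of $\mathrm{Spec}\,\tilde{R}$ over $\mathrm{Spec}\,K(B,\mathbb{C})$; each such component yields a $K(B,\mathbb{C})$-algebra homomorphism $\phi \colon \tilde{R} \to K$ into an algebraically closed extension $K$, and I would record the image tuple $y_i := \phi(x_i) \in K^\times$. This generalizes \cite{BH}, Lemma~5.1, where $B$ is a point and $K=\mathbb{C}$; the new ingredient here is the monomial relation $\prod_j x_j^{\langle\theta,b_j\rangle}=\xi_\theta^\vee$ coming from $C(^{P}\mathbf{\Sigma})$.

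The first step exploits the Stanley-Reisner relations $\prod_{i\in I}(1-x_i)=0$: for every $I\subseteq\{1,\ldots,n\}$ whose rays do not span a cone in $\Sigma$, some $y_i$ with $i\in I$ must equal $1$. Hence $S(y):=\{i:y_i\neq 1\}$ spans some cone $\sigma\in\Sigma$, and I would fix $\sigma$ and restrict attention to tuples with $y_j=1$ for $\rho_j\not\subset\sigma$.

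The second step processes the monomial relations, which under the support condition become
$$\prod_{\rho_i\subset\sigma}y_i^{\langle\theta,b_i\rangle}=\xi_\theta^\vee,\qquad \theta\in M.$$
For $\sigma$ top-dimensional, the restriction $M\to N_\sigma^\star$ (with $N_\sigma$ the $\mathbb{Z}$-span of $\{b_i:\rho_i\subset\sigma\}$) is injective with finite cokernel naturally isomorphic to the local group $N(\sigma)=N/N_\sigma$, which is in canonical bijection with $Box(\sigma)$. Once one solution is fixed, the remaining solutions form a torsor under $\mathrm{Hom}(N(\sigma),K^\times)\cong N(\sigma)$, giving exactly $|Box(\sigma)|$ tuples per top cone; a Box element whose image lies in a proper face is accounted for by the minimal containing cone, assembling the total count to $|Box(\mathbf{\Sigma})|$.

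Finally, I would check that the explicit formula $y_i=e^{2\pi i a_i}\sqrt[r_i]{\xi_i^\vee}$ for $v=\sum_{\rho_i\subset\sigma} a_i\overline{b}_i$ realizes one such solution: the unit factor contributes $\prod_i e^{2\pi i a_i\langle\theta,b_i\rangle}=e^{2\pi i\langle\theta,v\rangle}=1$ because $v\in N$ and $\theta\in M$, while the radical factors, expanded via $\overline{b}_i=\sum_k\langle u_k,b_i\rangle v_k$, reassemble into $\xi_\theta^\vee=\prod_k(\xi_k^\vee)^{\langle\theta,v_k\rangle}$. The principal obstacle I expect is the coherent selection of the $r_i$-th roots $\sqrt[r_i]{\xi_i^\vee}$ in a single extension of $K(B,\mathbb{C})$, together with verifying the monomial identity for \emph{every} $\theta\in M$ rather than merely for a generating set; compatibility is forced precisely by the identification $\mathrm{coker}(M\to N_\sigma^\star)\cong N(\sigma)$, which is the same combinatorial fact that makes $Box(\sigma)$ index the twisted sectors.
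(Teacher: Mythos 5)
Your proposal is correct and follows essentially the same route as the paper's proof: the Stanley--Reisner relations force the support $\{i : y_i\neq 1\}$ into a cone $\sigma$, and the monomial relations coming from $C(^{P}\mathbf{\Sigma})$ then pin down the $y_i$ up to the finite group $\Hom(N(\sigma),\mathbb{C}^{*})\cong Box(\sigma)$, realized by the stated roots of $\xi_i^{\vee}$. Your torsor formulation of the count and your explicit check that the formula satisfies the relations for all $\theta\in M$ are mild refinements of the paper's more direct computation, which evaluates the relations on a basis of $M$ dual to the ray generators of $\sigma$ and reads off the integrality condition $v\in N$.
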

\begin{proof}
The maximal ideals of $\tilde{R}$ viewed as $K(B,\mathbb{C})$-algebras correspond to points
$(y_1,\cdots,y_n)$ in $K(B,\mathbb{C})^{n}$ such that  
\begin{equation}\label{first_equation}
\prod_{1\leq j\leq n}y_j^{\langle\theta,b_{j}\rangle}-\prod_{1\leq i\leq d}(\xi^{\vee}_{i})^{\langle\theta,v_{i}\rangle}=0
\end{equation}
and 
$$\prod_{i\in I}(1-x_i)=0$$
for $\theta$ and $I$ in (\ref{ideal}) and (\ref{ideal2}).

Suppose that the $K(B,\mathbb{C})$-point $(y_1,\cdots,y_n)$ satisfies the above condition.  Since $\prod_{i\in I}(1-x_i)=0$, there is some cone $\sigma\in \Sigma$ such that $y_i=1$ for $\rho_i$ outside the cone 
$\sigma$.  Assume that $\sigma$ is generated by rays $\rho_1,\cdots,\rho_k$.

Consider the relation (\ref{first_equation}). 
Since this relation holds for any $\theta\in M$, and $y_i=1$ for $\rho_i$ outside the cone 
$\sigma$, we can take $\theta: N_{\sigma}\to  \mathbb{Z}$, where $N_{\sigma}$ is the intersection of $N$ with the rational span of $\rho_1,\cdots,\rho_k$. Then we can choose 
$\theta$ such that $\theta(v_i)=1$, and $\theta(v_j)=0$ for $j\neq i$.  The value
$y_i$ is a $r_i$-th root of $\xi_i$ for some integer $r_i$. So $y_i=e^{2\pi i a_i}\sqrt[r_i]{\xi_i^{\vee}}$.
The relation now reads $\prod_{1\leq i\leq k}e^{2\pi i a_i \langle\theta,b_i\rangle}=1$,  and then
$\sum_{i}\langle\theta,b_i\rangle a_i \in\mathbb{Z}$ for all $\theta$.  This is equivalent to 
$v=\sum_{\rho_i\subset \sigma}a_i\overline{b}_i\in N$. So the maximal ideals are in one-to-one correspondence
to the box elements $Box(\mathbf{\Sigma})$.
\end{proof} 

In the reduced case the ring $\tilde{R}$ is an Artinian module over $K(B,\mathbb{C})$.
The localization $\tilde{R}_{v}$ can be taken as a submodule of $\tilde{R}$, which is simple. 
According to \cite{ZS}, we have

\begin{equation}
\tilde{R}:=\frac{K(B)\otimes R}{I_{\mathbf{\Sigma}}+C(^{P}\mathbf{\Sigma})}\otimes\mathbb{C}
=\bigoplus_{v\in Box(\mathbf{\Sigma})}\tilde{R}_{v}.
\end{equation}

\begin{prop}\label{lem2}
Let $v\in Box(\mathbf{\Sigma})$ and $\sigma(\overline{v})$ the minimal cone in $\Sigma$ containing 
$\overline{v}$. Then the $K(B,\mathbb{C})$-algebra $\tilde{R}_{v}$ is isomorphic to the cohomology 
of the closed substack $^{P}\mathcal{X}(\mathbf{\Sigma}/\sigma(\overline{v}))$ of the toric stack bundle 
$^{P}\mathcal{X}(\mathbf{\Sigma})$.
\end{prop}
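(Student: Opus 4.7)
Following the approach of Borisov--Horja \cite{BH}, the plan is to build the isomorphism explicitly using the local presentation of $\tilde{R}$ around the maximal ideal $\mathfrak{m}_{v}$ identified in Lemma \ref{lem1}. The target ring is described by (\ref{ring-twisted-sector}) as a twisted Stanley--Reisner quotient over $H^{*}(B)$, so I will match the two presentations by a combined exponential/logarithmic change of variables, and then read off the claim from Theorem \ref{main} applied to the closed substack.

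\textbf{The substitution.} First I would work in the localization (equivalently, since $\tilde{R}$ is Artinian over $K(B,\mathbb{C})$, the completion) $\tilde{R}_{v}$. At the $K(B,\mathbb{C})$-point corresponding to $v$, Lemma \ref{lem1} tells us $x_{i}$ specializes to $e^{2\pi i a_{i}}\sqrt[r_{i}]{\xi_{i}^{\vee}}$ for $\rho_{i}\subset\sigma(\overline{v})$ and to $1$ otherwise. I write
$$x_{i}=y_{i}\cdot e^{-z_{i}},\qquad z_{i}\in\mathfrak{m}_{v},$$
and set up a correspondence $c_{1}(\mathcal{L}_{i})\leftrightarrow z_{i}$ for rays $\rho_{i}$ in $\mathrm{link}(\sigma(\overline{v}))$. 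For rays $\rho_{i}$ lying neither in $\sigma(\overline{v})$ nor in its link, the Stanley--Reisner relations $I_{\mathbf{\Sigma}}$ combined with the cone structure adjacent to $\sigma(\overline{v})$ force $1-x_{i}=0$ in $\tilde{R}_{v}$, so the corresponding $z_{i}$ vanish. The $z_{i}$ attached to rays inside $\sigma(\overline{v})$ can be solved for in terms of the link variables and the base classes $\xi_{j}^{\vee}$ using the linear relations (\ref{ideal}), exactly paralleling the way the cohomology generators $y^{\tilde b_j}$ for $\rho_j \subset \sigma(\overline v)$ are absorbed into the quotient description (\ref{ring-twisted-sector}).

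\textbf{Matching the relations.} The Stanley--Reisner relations (\ref{ideal2-toric}) restricted to the link, after the substitution, become $\prod_{i\in I}(1-e^{-z_{i}})=0$ for $\{\rho_{i}\}_{i\in I}$ not spanning a cone in $\Sigma/\sigma(\overline{v})$; expanding and using that the $z_i$ are nilpotent, these are equivalent to the Stanley--Reisner ideal $I_{\mathbf{\Sigma}/\sigma(\overline{v})}$ of the quotient fan appearing in (\ref{ring-twisted-sector}). For the character relations (\ref{ideal}), taking logarithms and invoking the identity
$$\sum_{\rho_{i}\subset\sigma(\overline{v})}\langle\theta,b_{i}\rangle\, a_{i}\equiv\langle\theta,v\rangle\pmod{\mathbb{Z}},$$
which holds precisely because $v \in N$, reduces them to the linear Chern-class relations (\ref{ideal-coh}) for the toric stack bundle $^{P}\mathcal{X}(\mathbf{\Sigma}/\sigma(\overline{v}))$ over $B$. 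Combining the two identifications yields a surjective $K(B,\mathbb{C})$-algebra map from $\tilde R_v$ onto the right-hand side of (\ref{ring-twisted-sector}), which a dimension count (over each maximal ideal of $K(B,\mathbb{C})$) upgrades to an isomorphism.

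\textbf{Main obstacle.} The delicate step is showing that the formal exponential/logarithmic manipulations assemble into a genuine isomorphism of the Artinian local algebra $\tilde{R}_{v}$ onto the finite-dimensional cohomology algebra, rather than merely an isomorphism of associated graded rings. The point is that $\tilde R_v$ is generated over $K(B,\mathbb{C})$ by the nilpotent classes $1-\mathcal{L}_i$ and by the torsion data encoded by $v$, so the higher-order terms in the Taylor expansion of $1-e^{-z_i}$ are absorbed into already-present relations. Tracking the torsion factors $e^{2\pi i a_{i}}$ and the fractional roots $\sqrt[r_{i}]{\xi_{i}^{\vee}}$, and checking that their non-nilpotent contributions cancel precisely because $v\in N$ (not merely $v\in N_{\mathbb{Q}}$), is the most intricate piece of the argument.
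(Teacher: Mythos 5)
Your proposal is correct and follows essentially the same route as the paper's own proof: localize at the maximal ideal attached to $v$ from Lemma \ref{lem1}, substitute $x_i = y_i e^{-z_i}$ (the paper's $z_i=\log(x_i)$ resp.\ $\log(x_i e^{-2\pi i a_i}(\sqrt[r_i]{\xi_i^{\vee}})^{-1})$), kill the variables for rays outside the star of $\sigma(\overline{v})$, and translate the Stanley--Reisner and multiplicative character relations into the presentation (\ref{ring-twisted-sector}) via $ch(\xi_i)=e^{c_1(\xi_i)}$. The only cosmetic difference is that you close with a surjection-plus-dimension-count while the paper matches the two presentations directly; both hinge on the same identification of generators and relations.
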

\begin{proof}
Let $\sigma(\overline{v})$ be generated by the rays $\rho_1,\cdots,\rho_k$, and let $\overline{v}=\sum_{1\leq i\leq k}a_i\overline{b}_i$
with $a_i\in(0,1)$. For the rest of rays $\rho_{k+1},\cdots,\rho_{n}$, we may assume that $\rho_{k+1},\cdots,\rho_{l}$ are contained in some cone $\sigma'$ containing $\sigma$, and $\rho_{l+1},\cdots,\rho_n$ are not.

Now localizing gives the $K(B,\mathbb{C})$-algebra $\tilde{R}_{v}$.  Then $x_i-1$ is nilpotent for $i>k$, and 
$x_i-e^{2\pi i a_i}\sqrt[r_i]{\xi_i^{\vee}}$ is nilpotent for $1\leq i\leq k$.  Similar to  Lemma 5.2 of  \cite{BH}, let 
$$
z_i=\begin{cases}
\mbox{log}(x_i), & i>k,\\
\mbox{log}(x_i e^{-2\pi i a_i}(\sqrt[r_i]{\xi_i^{\vee}})^{-1}), & 1\leq i\leq k.
\end{cases}
$$ 
Now we work over the quotient ring $\tilde{R}_{1}$ of $\tilde{R}$ by a sufficiently high power of the 
maximal ideal.  Using the same method as in \cite{BH}, we see that $z_j=0$ in $\tilde{R}_{v}$ for $j>l$. 
And the relations 
$$\prod_{i\in I}(x_i-1)=0$$
are translated to
$$\prod_{i\in I_{\Sigma/\sigma}}z_i=0,$$
where $I_{\Sigma/\sigma}$ represents the subset of $\{k+1,\cdots,l\}$ such that 
$\{\rho_i| i\in I_{\Sigma/\sigma}\}$ are not contained in any cone of $\Sigma/\sigma$. 
(Note that $\{\rho_{k+1},\cdots,\rho_l\}$ are the link set of $\sigma$). So the relations $\prod_{i\in I}(x_i-1)=0$ determine the relations 
$\prod_{i\in I_{\Sigma/\sigma}}z_i=0$ in the quotient fan $\Sigma/\sigma$.

Let $ch: K(B,\mathbb{C})\to H^{*}(B,\mathbb{C})$
be the Chern character isomorphism from the 
$K$-theory of $B$ to the cohomology.
Then $ch(\xi_i)=e^{c_1(\xi_i)}$.

Consider the linear relations
$$\prod_{1\leq j\leq n}x_{j}^{\langle\theta,b_{j}\rangle}-\prod_{1\leq i\leq d}(\xi^{\vee}_{i})^{\langle\theta,v_{i}\rangle}=0$$
for $\theta\in M$.  Replacing the relations by $z_i$ we get
\begin{equation}\label{relation1-5}
\prod_{i=1}^{k}e^{2\pi i a_i\langle\theta,b_i\rangle}\xi_i^{\langle\theta,v_i\rangle}
\prod_{i=1}^{k+l}e^{z_i\langle\theta,b_{j}\rangle}-\prod_{1\leq i\leq d}(\xi^{\vee}_{i})^{\langle\theta,v_{i}\rangle}=0.
\end{equation}

Let $N_{\sigma(v)}$ be the sublattice generated by $\sigma(v)$, and 
$N(\sigma(v))=N/N_{\sigma(v)}$.  Let $\overline{N}(\sigma(v))$ be the free part of 
$N(\sigma(v))$, and $M(\sigma(v)):=N(\sigma(v))^{\star}$.
Consider the following diagram:
\begin{equation}\label{diagram}
\xymatrix{
N\rto^{\pi}\dto_{\theta} &N(\sigma(v))\dlto^{\tilde{\theta}}\\
\text{$\mathbb{Z}$}& }
\end{equation}
where $\pi$ is the natural morphism. For any $\tilde{\theta}\in M(\sigma(v))$, there is an element
$\theta\in M$ induced from diagram (\ref{diagram}).  Since $\xi_{\theta}=\prod_{1\leq i\leq d}\xi_{i}^{\langle\theta,v_{i}\rangle}$, and 
$e^{c_{1}(\xi_{\theta})}=ch(\xi_{\theta})$,  passing to the quotient fan 
$\Sigma/\sigma(v)$ in the lattice $\overline{N}(\sigma(v))$ the equation (\ref{relation1-5})
becomes 
$$e^{\sum_{i=k+1}^{k+l}z_i\langle\theta,b_i\rangle}-e^{c_1(\xi_{\tilde{\theta}}^{\vee})}=0.$$
So these relations yield
$$\sum_{i=k+1}^{k+l}z_i\langle\theta,b_i\rangle+c_1(\xi_{\tilde{\theta}})=0$$
which are exactly the linear relations in the cohomology ring of toric stack bundles.
Since $x_1,\cdots,x_k$ can be represented as linear combinations of $z_{k+1},\cdots,z_{l}$, the 
algebra $\tilde{R}_{v}$ is isomorphic to the ring $H^{*}(B)[z_{k+1},\cdots,z_l]$ with relations
$$\prod_{i\in I_{\Sigma/\sigma}}z_i=0,\quad \text{and}\quad \sum_{i=k+1}^{k+l}z_i\langle\theta,b_i\rangle+c_1(\xi_{\tilde{\theta}})=0.$$
So compared to the result in 
(\ref{ring-twisted-sector}),
$\tilde{R}_{v}$ is isomorphic to $H^{*}(^{P}\mathcal{X}(\mathbf{\Sigma}/\sigma),\mathbb{C})$.
\end{proof}

The decomposition 
(\ref{decomposition}) then yields the following.

\begin{thm}\label{module-iso}
Assume that the toric Deligne-Mumford stack $\mathcal{X}(\mathbf{\Sigma})$ is semi-projective. 
There is a Chern character map from $K_0(^{P}\mathcal{X}(\mathbf{\Sigma}),\mathbb{C})$
to the Chen-Ruan cohomology $H^{*}_{CR}(^{P}\mathcal{X}(\mathbf{\Sigma}),\mathbb{C})$
which is a module isomorphism.
\end{thm}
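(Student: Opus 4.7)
The plan is to assemble the results already established. By Theorem \ref{main}, we have the identification
$$K_0(^{P}\mathcal{X}(\mathbf{\Sigma}),\mathbb{C}) \simeq \tilde{R} = \frac{K(B)\otimes R}{I_{\mathbf{\Sigma}}+C(^{P}\mathbf{\Sigma})}\otimes\mathbb{C}.$$
First I would invoke Lemma \ref{lem1}, which identifies the maximal ideals of $\tilde{R}$ (viewed as a $K(B,\mathbb{C})$-algebra) with the box elements of $\mathbf{\Sigma}$. Since the toric stack is reduced, $\tilde{R}$ is Artinian over $K(B,\mathbb{C})$, so the standard structure theory (as cited via \cite{ZS}) gives the primary decomposition
$$\tilde{R}=\bigoplus_{v\in Box(\mathbf{\Sigma})}\tilde{R}_{v},$$
where $\tilde{R}_v$ is the localization at the maximal ideal corresponding to $v$.

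Next I would apply Proposition \ref{lem2}, which provides the crucial local identification: for each $v\in Box(\mathbf{\Sigma})$ with minimal cone $\sigma(\overline{v})$,
$$\tilde{R}_{v}\;\cong\;H^{*}\bigl(^{P}\mathcal{X}(\mathbf{\Sigma}/\sigma(\overline{v})),\mathbb{C}\bigr).$$
Summing over $v\in Box(\mathbf{\Sigma})$ gives
$$K_0(^{P}\mathcal{X}(\mathbf{\Sigma}),\mathbb{C})\;\cong\;\bigoplus_{v\in Box(\mathbf{\Sigma})}H^{*}\bigl(^{P}\mathcal{X}(\mathbf{\Sigma}/\sigma(\overline{v})),\mathbb{C}\bigr).$$
I would then match the right-hand side with the Chen-Ruan decomposition recalled in (\ref{decomposition}), namely
$$H_{CR}^{*}\bigl(^{P}\mathcal{X}(\mathbf{\Sigma}),\mathbb{C}\bigr)=\bigoplus_{v\in Box(\mathbf{\Sigma})}H^{*}\bigl(^{P}\mathcal{X}(\mathbf{\Sigma}/\sigma(\overline{v})),\mathbb{C}\bigr).$$
Summand-by-summand the two decompositions agree, yielding a $K(B,\mathbb{C})$-module (indeed $H^*(B,\mathbb{C})$-module) isomorphism.

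Finally I would define the Chern character map $\widetilde{ch}$ as the direct sum over $v$ of the isomorphisms from Proposition \ref{lem2}; on each component this is built from the classical Chern character $ch:K(B,\mathbb{C})\to H^*(B,\mathbb{C})$ combined with the logarithmic change of variables $z_i = \log(x_i e^{-2\pi i a_i}(\sqrt[r_i]{\xi_i^{\vee}})^{-1})$ used in the proof of Proposition \ref{lem2}. The combinatorial nature of the construction ensures naturality in the base $B$. The main (small) obstacle is bookkeeping: one must check that the inverse-image line bundles $\mathcal{L}_\chi$ localized at $v$ really correspond under $\widetilde{ch}$ to the generators of the twisted sector cohomology in the form given by (\ref{ring-twisted-sector}); this is exactly what Proposition \ref{lem2} achieves, so no further computation is required. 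The resulting $\widetilde{ch}$ is the desired module isomorphism, completing the proof.
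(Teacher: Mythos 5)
Your proposal is correct and follows essentially the same route as the paper: the paper's proof likewise just assembles Theorem \ref{main}, Lemma \ref{lem1}, Proposition \ref{lem2}, and the decomposition (\ref{decomposition}) to conclude that $\mathcal{L}\mapsto ch(\mathcal{L})$ is a module isomorphism. Your extra remarks on defining the map summand-by-summand via the logarithmic change of variables only make explicit what the paper leaves implicit.
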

\begin{proof}
By Theorem \ref{main}, Lemma \ref{lem1} and Proposition \ref{lem2},  the Chern character map
$$ch: K_0(^{P}\mathcal{X}(\mathbf{\Sigma}),\mathbb{C})\longrightarrow
H^{*}_{CR}(^{P}\mathcal{X}(\mathbf{\Sigma}),\mathbb{C})$$
defined by $\mathcal{L}\mapsto ch(\mathcal{L})$ is a module isomorphism.
\end{proof}

\subsection{Ring Homomorphism}\Label{ring}

In this section we use the stringy $K$-theory product defined in \cite{JKK} to study the 
ring homomorphism of Chern character.

Let $^{P}\mathcal{X}(\mathbf{\Sigma})=[(P\times_{(\mathbb{C}^{*})^{m}}Z)/G]$
be the toric stack bundle associated to the stacky fan $\mathbf{\Sigma}$ and the smooth 
variety $B$.  Its $K$-theory admits the following decomposition (see e.g. \cite{AS}, \cite{AR}, \cite{VV}):
\begin{align}
K(^{P}\mathcal{X}(\mathbf{\Sigma}))&=K_{G}(P\times_{(\mathbb{C}^{*})^{m}}Z)  \nonumber \\
&=(K(I_{G}(P\times_{(\mathbb{C}^{*})^{m}}Z)))^{G}  \nonumber \\
&=\sum_{g\in G}(K(P\times_{(\mathbb{C}^{*})^{m}}Z)^{g})^{G}.
\end{align}

By Proposition \ref{r-inertia}  and  \cite{Jiang},  the twisted sectors 
$^{P}\mathcal{X}(\mathbf{\Sigma}/\sigma(\overline{v}))$
of $^{P}\mathcal{X}(\mathbf{\Sigma})$
are indexed by the box elements $v\in Box(\mathbf{\Sigma})$.
For each $v$ in the box, there exists a unique $g\in G$ such that 
$$^{P}\mathcal{X}(\mathbf{\Sigma}/\sigma(\overline{v}))\cong [(P\times_{(\mathbb{C}^{*})^{m}}Z)^{g}/G].$$

Let $\mathcal{F}_{v_1}, \mathcal{F}_{v_2}\in K_0(^{P}\mathcal{X}(\mathbf{\Sigma}))$.  The stringy $K$-theory product  of \cite{JKK} is defined by 
\begin{equation}\label{new-product}
\mathcal{F}_{v_1}\star \mathcal{F}_{v_2}=(I\circ e_3)_{*}(e_1^{*}\mathcal{F}_{v_1}\otimes e_2^{*}\mathcal{F}_{v_2}\otimes \lambda_{-1}(Ob^{*}_{v_1,v_2,v_3})),
\end{equation}
where 
$$e_i:  ~^{P}\mathcal{X}(\mathbf{\Sigma}/\sigma(v_1,v_2,v_3))\longrightarrow ~^{P}\mathcal{X}(\mathbf{\Sigma}/\sigma(v_i))$$
is the evaluation map, and $I:  ~^{P}\mathcal{X}(\mathbf{\Sigma}/\sigma(v))\to ~^{P}\mathcal{X}(\mathbf{\Sigma}/\sigma(v^{-1}))$
is the involution map. Needless to say, the stringy $K$-theory product is defined in a way very similar to that of Chen-Ruan cup product.

Let $^{P}\mathcal{X}(\mathbf{\Sigma}/\sigma(v))$ be a twisted sector and  $W_{v}=T_{^{P}\mathcal{X}(\mathbf{\Sigma})}|_{^{P}\mathcal{X}(\mathbf{\Sigma}/\sigma(v))}$.   Define $W_{v,k}$ to be 
the eigenbundle of $W_{v}$,  where $v$ acts by multiplication by $\zeta^{k}=e^{2\pi i k/r}$.
Following \cite{JKK},  we define 
\begin{equation*}
\mathcal{T}_{v}:=\bigoplus_{k=0}^{r-1}\frac{k}{r}W_{v,k}.
\end{equation*}
For $v=\sum_{\rho_i\subset \sigma(\overline{v})}\alpha_i b_i$, this reads
\begin{equation*}
\mathcal{T}_{v}=\bigoplus_{\rho_i\subset\sigma(\overline{v})}\alpha_i \mathcal{L}_{i}.
\end{equation*}

\begin{thm}
The ``stringy'' Chern character morphism 
$$ch_{orb}: K_0(^{P}\mathcal{X}(\mathbf{\Sigma}),\mathbb{C})\longrightarrow H^{*}_{CR}(^{P}\mathcal{X}(\mathbf{\Sigma}),\mathbb{C}), \quad ch_{orb}(\mathcal{F}_{v}):=ch(\mathcal{F}_{v})td^{-1}\mathcal{T}_{v}$$
is an isomorphism as rings under the stringy $K$-theory product.
\end{thm}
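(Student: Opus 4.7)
The plan is to build on Theorem~\ref{module-iso}, which already gives a $K(B,\mathbb{C})$-module isomorphism, and reduce the theorem to checking compatibility of products. The starting point is that, since both $K_0(^{P}\mathcal{X}(\mathbf{\Sigma}),\mathbb{C})$ and $H^{*}_{CR}(^{P}\mathcal{X}(\mathbf{\Sigma}),\mathbb{C})$ decompose over the box elements $v \in Box(\mathbf{\Sigma})$, and $ch_{orb}$ respects this decomposition, it suffices to verify that $ch_{orb}(\mathcal{F}_{v_1} \star \mathcal{F}_{v_2}) = ch_{orb}(\mathcal{F}_{v_1}) \cup_{CR} ch_{orb}(\mathcal{F}_{v_2})$ for classes $\mathcal{F}_{v_i}$ supported on individual twisted sectors. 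The overall framework is that of Jarvis--Kaufmann--Kimura \cite{JKK}, whose stringy Chern character theorem applies whenever the stack is a smooth global quotient; in our setting $^{P}\mathcal{X}(\mathbf{\Sigma}) = [(P \times_{(\mathbb{C}^{*})^{m}} Z)/G]$ with $G$ diagonalizable and $P \times_{(\mathbb{C}^{*})^{m}} Z$ smooth, so the hypotheses hold.

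The central computation would be the following. Starting from the definition (\ref{new-product}), apply Grothendieck--Riemann--Roch to the proper representable morphism $I \circ e_3$ to rewrite
\[
ch\bigl((I \circ e_3)_{*}(e_1^{*}\mathcal{F}_{v_1} \otimes e_2^{*}\mathcal{F}_{v_2} \otimes \lambda_{-1}(Ob^{*}_{v_1,v_2,v_3}))\bigr)
\]
as a push-pull expression on cohomology, absorbing the relative Todd class into the Todd-twist factors $td^{-1}\mathcal{T}_{v_i}$ on each sector. Combined with the elementary identity $ch(\lambda_{-1}(E^{*})) = c_{top}(E) \cdot td^{-1}(E)$ and the explicit formula of Proposition~\ref{obstructionbdle}, which says the obstruction bundle on the triple sector is $\bigoplus_{a_i=2} \mathcal{L}_i|_{^{P}\mathcal{X}(\mathbf{\Sigma}/\sigma(\bar v_1, \bar v_2, \bar v_3))}$, the $K$-theoretic obstruction contribution turns into precisely the Euler class $\prod_{a_i=2} c_1(\mathcal{L}_i)$ that appears in the Chen-Ruan cup product.

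The main obstacle I expect is the bookkeeping of the three Todd-twist factors $td^{-1}\mathcal{T}_{v_1}$, $td^{-1}\mathcal{T}_{v_2}$, $td^{-1}\mathcal{T}_{v_3}$ across the evaluation maps $e_i$, together with the Todd class of the relative tangent complex of $I \circ e_3$. One must verify the identity
\[
(I \circ e_3)^{*}\mathcal{T}_{v_1 v_2} \;\oplus\; Ob_{v_1,v_2,v_3} \;=\; e_1^{*}\mathcal{T}_{v_1} \oplus e_2^{*}\mathcal{T}_{v_2} \oplus (\text{relative tangent corrections})
\]
in $K$-theory on the triple sector, which is a sector-by-sector statement about the line bundles $\mathcal{L}_i$. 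Because all bundles involved are sums of the $\mathcal{L}_i$'s with rational coefficients given by the combinatorial $a_i$'s from the box structure, this reduces to a finite combinatorial identity in the $\alpha_i$'s indexing $\bar v_1, \bar v_2, \bar v_3$, which is exactly the content of the JKK comparison lemma and can be imported directly. Once that identity is in hand, assembling it with the module isomorphism of Theorem~\ref{module-iso} yields the stringy ring isomorphism.
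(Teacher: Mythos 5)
Your proposal is correct and follows essentially the same route as the paper: invoke the module isomorphism of Theorem~\ref{module-iso}, reduce to checking multiplicativity sector by sector, and match the $K$-theoretic obstruction term $\lambda_{-1}(Ob^{*})$ against the Euler class in the Chen--Ruan product via the identity $ch(\lambda_{-1}(E^{*}))\,td(E)=e(E)$ together with the Todd-twist bookkeeping $td^{-1}\mathcal{T}_{v_1}\cdot td^{-1}\mathcal{T}_{v_2}\cdot td(Ob)=td^{-1}\mathcal{T}_{v_3^{-1}}$, all of which the paper likewise imports from \cite{JKK}, Theorem 9.5. The paper simply carries out this check on the generating line bundles $\mathcal{L}_i$, $\mathcal{L}_j$ rather than on general sector classes, so your slightly more explicit treatment of the Grothendieck--Riemann--Roch step is a presentational difference, not a different argument.
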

\begin{proof}
This is a special case of \cite{JKK}, Theorem 9.5. By Theorem \ref{module-iso}, the morphism is a module isomorphism. It remains to check the product.  
Let $\mathcal{L}_i$ and $\mathcal{L}_j$ be line bundles over $^{P}\mathcal{X}(\mathbf{\Sigma})$ as defined before, then we have
\begin{equation*}
\begin{split}
ch_{orb}(\mathcal{L}_i\star \mathcal{L}_j)&=ch(\mathcal{L}_i\otimes \mathcal{L}_j\otimes \lambda_{-1}(\oplus_{a_i=2}\mathcal{L}_{i})^{*})\cdot td^{-1}\mathcal{T}_{v_{3}^{-1}},\\
ch_{orb}(\mathcal{L}_i)\cup_{CR}ch_{orb}(\mathcal{L}_j)&=ch(\mathcal{L}_i)\cdot td^{-1}\mathcal{T}_{v_{1}} ch(\mathcal{L}_j)\cdot td^{-1}\mathcal{T}_{v_{2}} \cdot e(\oplus_{a_i=2}\mathcal{L}_{i}).
\end{split}
\end{equation*}
Since 
\begin{equation*}
\begin{split}
td(\oplus_{a_i=2}\mathcal{L}_{i})\cdot ch(\lambda_{-1}(\oplus_{a_i=2}\mathcal{L}_{i})^{*})&=
e(\oplus_{a_i=2}\mathcal{L}_{i}),\\
td^{-1}\mathcal{T}_{v_{1}} \cdot td^{-1}\mathcal{T}_{v_{2}} \cdot td(\oplus_{a_i=2}\mathcal{L}_{i})&=
td^{-1}\mathcal{T}_{v_{3}^{-1}},
\end{split}
\end{equation*}
we conclude
$ch_{orb}(\mathcal{L}_i\star \mathcal{L}_j)=
ch_{orb}(\mathcal{L}_i)\cup_{CR}ch_{orb}(\mathcal{L}_j).$
\end{proof}
\section{Example: Finite Abelian Gerbes}\Label{gerbes}

In \cite{Jiang}, the degenerate case of
toric stack bundles, namely finite abelian gerbes over smooth varieties, were studied.  
In this section we compute their $K$-theory. We first recall the construction of finite abelian gerbes.

Let
$N=\mathbb{Z}_{p_{1}^{n_{1}}}\oplus\cdots\oplus\mathbb{Z}_{p_{s}^{n_{s}}}$
be a finite abelian group, where $p_{1},\cdots,p_{s}$ are prime
numbers and $n_{1},\cdots,n_{s}>1$. Let $\beta:
\mathbb{Z}\to N$ be given by the vector
$(1,1,\cdots,1)$. $N_{\mathbb{Q}}=0$ implies that $\Sigma=0$, then
$\mathbf{\Sigma}=(N,\Sigma,\beta)$ is a stacky
fan. Let
$n=lcm(p_{1}^{n_{1}},\cdots,p_{s}^{n_{s}})$, then
$n=p_{i_{1}}^{n_{i_{1}}}\cdots p_{i_{t}}^{n_{i_{t}}}$, where
$p_{i_{1}},\cdots, p_{i_{t}}$ are the distinct prime numbers which
have the highest powers $n_{i_{1}},\cdots, n_{i_{t}}$. Note that
the vector $(1,1,\cdots,1)$ generates an order $n$ cyclic subgroup
of $N$. We calculate  the Gale dual $\beta^{\vee}:
\mathbb{Z}\stackrel{}{\to} \mathbb{Z}\oplus
\bigoplus_{i\notin
\{i_{1},\cdots,i_{t}\}}\mathbb{Z}_{p_{i}}^{n_{i}}$, where
$DG(\beta)=\mathbb{Z}\oplus \bigoplus_{i\notin
\{i_{1},\cdots,i_{t}\}}\mathbb{Z}_{p_{i}}^{n_{i}}$. We have the
following exact sequence:
$$0\longrightarrow \mathbb{Z}\longrightarrow \mathbb{Z}\stackrel{\beta}{\longrightarrow}N\longrightarrow
\bigoplus_{i\notin
\{i_{1},\cdots,i_{t}\}}\mathbb{Z}_{p_{i}}^{n_{i}}\longrightarrow
0,$$
$$0\longrightarrow 0\longrightarrow \mathbb{Z}\stackrel{(\beta)^{\vee}}{\longrightarrow}\mathbb{Z}\oplus
\bigoplus_{i\notin
\{i_{1},\cdots,i_{t}\}}\mathbb{Z}_{p_{i}}^{n_{i}}\longrightarrow
\mathbb{Z}_{n}\oplus\bigoplus_{i\notin
\{i_{1},\cdots,i_{t}\}}\mathbb{Z}_{p_{i}}^{n_{i}}\longrightarrow
0.$$ 
So we obtain \begin{equation}\Label{gerbe}1\longrightarrow
\mu\longrightarrow \mathbb{C}^{*}\times \prod_{i\notin
\{i_{1},\cdots,i_{t}\}}\mu_{p_{i}}^{n_{i}}\stackrel{\alpha}{\longrightarrow}\mathbb{C}^{*}\longrightarrow
1, \end{equation} where the map $\alpha$ in (\ref{gerbe}) is
given by the matrix  $[n,0,\cdots,0]^{t}$ and $\mu=\mu_{n}\times\prod_{i\notin
\{i_{1},\cdots,i_{t}\}}\mu_{p_{i}}^{n_{i}}\cong N$. 
The toric Deligne-Mumford stack associated with the data is
$$\mathcal{X}(\mathbf{\Sigma})=[\mathbb{C}^{*}/\mathbb{C}^{*}\times
\prod_{i\notin
\{i_{1},\cdots,i_{t}\}}\mu_{p_{i}}^{n_{i}}]=\mathcal{B}\mu,$$
i.e.
the classifying stack of the group $\mu$. 

Let $L$ be a line bundle
over a smooth variety $B$ and $L^{*}$ the principal
$\mathbb{C}^{*}$-bundle induced from $L$ removing the zero
section.  From our twist we have
$$^{L^{*}}\mathcal{X}(\mathbf{\Sigma})=L^{*}\times_{\mathbb{C}^{*}}[\mathbb{C}^{*}/\mathbb{C}^{*}\times
\prod_{i\notin \{i_{1},\cdots,i_{t}\}}\mu_{p_{i}}^{n_{i}}]
=[L^{*}/\mathbb{C}^{*}\times \prod_{i\notin
\{i_{1},\cdots,i_{t}\}}\mu_{p_{i}}^{n_{i}}],$$
 which is a $\mu$-gerbe $\mathcal{X}$ over $B$. 

\begin{rmk}
The structure of this gerbe is
a $\mu_{n}$-gerbe coming from the line bundle $L$ plus a trivial
$\prod_{i\notin \{i_{1},\cdots,i_{t}\}}\mu_{p_{i}}^{n_{i}}$-gerbe
over $B$. 
\end{rmk}

For this toric stack bundle,
$Box(\mathbf{\Sigma})=N$,  the Chen-Ruan cohomology  was computed in
\cite{Jiang}.

\begin{prop}[\cite{Jiang}]\label{orbifold-coh}
The Chen-Ruan cohomology ring of the finite abelian $\mu$-gerbe
$\mathcal{X}$ is:
$$H^{*}_{CR}(\mathcal{X},\mathbb{Q})\cong H^{*}(B,\mathbb{Q})\otimes H^{*}_{CR}(\mathcal{B}\mu,\mathbb{Q}),$$
where
$H^{*}_{CR}(\mathcal{B}\mu;\mathbb{Q})=\mathbb{Q}[t_{1},\cdots,t_{s}]/(t_{1}^{p_{1}^{n_{1}}}-1,\cdots,t_{s}^{p_{s}^{n_{s}}}-1)$.
\end{prop}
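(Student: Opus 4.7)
The plan is to apply the general Chen-Ruan cohomology computation (the theorem from \cite{Jiang} stated in Section \ref{CR-ring}, giving $H_{CR}^{*}({}^{P}\mathcal{X}(\mathbf{\Sigma}))\cong H^{*}(B)[N]^{\mathbf{\Sigma}}/\mathcal{I}(^{P}\mathbf{\Sigma})$) to the specific stacky fan underlying the gerbe. The first step is to identify the combinatorial data: since $N$ is a finite abelian group, we have $\overline{N}=N/N_{tor}=0$, so the fan $\Sigma$ consists only of the zero cone. Hence for every $c\in N$ the image $\overline{c}=0$ lies in the unique cone, and $Box(\mathbf{\Sigma})=N$. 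This matches the inertia-stack decomposition ${}^{L^{*}}\mathcal{X}=\coprod_{v\in N}{}^{L^{*}}\mathcal{X}$ expected for a finite abelian gerbe.

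Next I would show that the ideal $\mathcal{I}(^{P}\mathbf{\Sigma})$ of linear relations (\ref{ideal-coh}) is trivial. The dual lattice is $M=N^{\star}=0$ because $N$ is entirely torsion, so there are no elements $\theta\in M$ to produce relations. In particular, the Chern class $c_1(L)$ of the twisting line bundle, which would otherwise enter via the $\xi_\theta$ term, does not contribute to the Chen-Ruan cohomology ring with rational coefficients; this is consistent with the remark that the $\mu_n$-gerbe from $L$ is cohomologically trivial over $\mathbb{Q}$.

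I would then analyze the multiplication on $\mathbb{Q}[N]^{\mathbf{\Sigma}}$ defined by (\ref{product}). Since every $\overline{c}_i$ equals $0\in\overline{N}$, the pair $(\overline{c}_1,\overline{c}_2)$ is always contained in the zero cone of $\Sigma$, so the twisted product collapses to the ordinary group-algebra product $y^{c_1}\cdot y^{c_2}=y^{c_1+c_2}$ on $N$. Consequently $\mathbb{Q}[N]^{\mathbf{\Sigma}}=\mathbb{Q}[N]$ as $\mathbb{Q}$-algebras, and the theorem gives $H^{*}_{CR}(\mathcal{X},\mathbb{Q})\cong H^{*}(B,\mathbb{Q})\otimes \mathbb{Q}[N]$.

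Finally I would match $\mathbb{Q}[N]$ with the claimed presentation and with $H^{*}_{CR}(\mathcal{B}\mu,\mathbb{Q})$. Writing $N=\bigoplus_{i=1}^{s}\mathbb{Z}_{p_{i}^{n_{i}}}$ and letting $t_i$ denote the image of the generator of $\mathbb{Z}_{p_{i}^{n_{i}}}$ under the obvious inclusion into $\mathbb{Q}[N]$, the usual presentation of a finite abelian group algebra gives $\mathbb{Q}[N]=\mathbb{Q}[t_1,\ldots,t_s]/(t_1^{p_1^{n_1}}-1,\ldots,t_s^{p_s^{n_s}}-1)$. Specializing the same computation to the case $B=\mathrm{pt}$ (i.e.\ the trivial line bundle $L$) identifies this ring with $H^{*}_{CR}(\mathcal{B}\mu,\mathbb{Q})$ via the identification $\mu\cong N$ recorded after (\ref{gerbe}), completing the proof. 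The only real technicality, which is easily checked, is the verification that the multiplicative structure on the box components really is the untwisted group algebra product, since all potential obstruction-bundle contributions vanish when the underlying fan is the zero fan; no nontrivial obstacle remains.
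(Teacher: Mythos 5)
Your proposal is correct. The paper offers no proof of this proposition (it is quoted from \cite{Jiang}), and your derivation --- specializing the general theorem $H_{CR}^{*}({}^{P}\mathcal{X}(\mathbf{\Sigma}))\cong H^{*}(B)[N]^{\mathbf{\Sigma}}/\mathcal{I}({}^{P}\mathbf{\Sigma})$ to the zero fan, noting $M=N^{\star}=0$ kills the linear relations, that the deformed product degenerates to the ordinary group-algebra product, and that $\mathbb{Q}[N]$ has the stated presentation --- is exactly the intended argument and is complete.
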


For the stacky fan $\mathbf{\Sigma}=(N,0,\beta)$, the minimal stacky fan is given by
$\mathbf{\Sigma_{min}}=(N,0,\beta_{min})$, where $\beta_{min}=0: 0\to N$
is the zero map. So the Gale dual map is still the map
$\beta_{min}$, and
$$G_{min}=\text{Hom}_{\mathbb{Z}}(N,\mathbb{C}^{*})\cong \mu.$$

The characters of $\mu\simeq N$ are  given by all the maps $\chi: \mu\to \mathbb{C}^{*}$.
Since $N=\mathbb{Z}_{p_{1}^{n_{1}}}\oplus\cdots\oplus\mathbb{Z}_{p_{s}^{n_{s}}}$, let 
$\chi_{1},\cdots,\chi_{s}$ be the base generators of the  characters of $N$ such that 
$\chi_{1}^{p_{1}^{n_{1}}},\cdots,\chi_{1}^{p_{s}^{n_{s}}}$ are trivial. 
Every character $\chi_i$ determines a line bundle $\mathcal{L}_{i}$ over $\mathcal{X}$
such that $\mathcal{L}_{1}^{p_{i}^{n_{i}}}$ is trivial. 
Then Theorem \ref{main} implies

\begin{thm}\label{k-gerbe}
The $K$-theory ring of the finite abelian gerbe $\mathcal{X}$ is:
$$K_0(\mathcal{X})\simeq\frac{K(B)[\mathcal{L}_{1},\cdots,\mathcal{L}_{s}]}
{(\mathcal{L}_{1}^{p_{1}^{n_{1}}},\cdots,\mathcal{L}_{s}^{p_{s}^{n_{s}}})}.$$
\end{thm}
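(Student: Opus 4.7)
The plan is to apply Theorem \ref{main} to the degenerate stacky fan $\mathbf{\Sigma}=(N,0,\beta)$ constructed above, with principal bundle $P=L^*$ over $B$; the strategy is to show that both ideals $I_{\mathbf{\Sigma}}$ and $C(^P\mathbf{\Sigma})$ vanish in this setting, so that the $K$-theory reduces to a simple tensor product.

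First I would verify the vanishing of the two ideals. Because the fan has no rays at all ($n=0$), there are no subsets $I\subseteq[1,\ldots,n]$ producing non-trivial relations of type (\ref{ideal2}), so $I_{\mathbf{\Sigma}}=0$. Because $N$ is finite torsion of rank $d=0$, the dual lattice $M=N^{\star}=\text{Hom}_{\mathbb{Z}}(N,\mathbb{Z})=0$, and the generators of $C(^P\mathbf{\Sigma})$ in (\ref{ideal}), indexed by $\theta\in M$, degenerate to $1-1=0$; hence $C(^P\mathbf{\Sigma})=0$. Theorem \ref{main} then collapses to an isomorphism $K_0(\mathcal{X})\cong K(B)\otimes R$.

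Next I would identify $R$ explicitly. The excerpt already shows $G_{min}=\text{Hom}_{\mathbb{Z}}(N,\mathbb{C}^*)\cong\mu\cong N$. Applying the duality $\text{Hom}_{\mathbb{Z}}(A\oplus B,\mathbb{C}^*)\cong\text{Hom}_{\mathbb{Z}}(A,\mathbb{C}^*)\oplus\text{Hom}_{\mathbb{Z}}(B,\mathbb{C}^*)$ to the decomposition $N=\bigoplus_{i=1}^{s}\mathbb{Z}_{p_i^{n_i}}$ produces characters $\chi_1,\ldots,\chi_s$ of orders $p_i^{n_i}$ which generate the character group freely subject to those order relations, giving the presentation
$$R\cong\mathbb{Z}[\chi_1,\ldots,\chi_s]/(\chi_i^{p_i^{n_i}}-1:1\le i\le s).$$
Composing with the isomorphism $\phi$ of Theorem \ref{main}, which sends $\chi_i\mapsto[\mathcal{L}_i]$, translates these relations into the triviality of $\mathcal{L}_i^{p_i^{n_i}}$ and produces the stated ring structure.

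There is no serious obstacle here: the entire argument is a direct specialization of Theorem \ref{main}, and the only bookkeeping is to track how the two ideals degenerate when the fan is empty and $N$ is pure torsion. The generators $\mathcal{L}_i^{p_i^{n_i}}$ in the displayed quotient should be read as $\mathcal{L}_i^{p_i^{n_i}}-1$, i.e., the triviality relation that Theorem \ref{main} naturally delivers.
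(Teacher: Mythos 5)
Your proposal is correct and follows essentially the same route as the paper: both specialize Theorem \ref{main} to the zero fan, observe that $I_{\mathbf{\Sigma}}$ and $C(^{P}\mathbf{\Sigma})$ are trivial (no rays, and $M=N^{\star}=0$ since $N$ is torsion), and identify $R$ with the character ring of $\mu\cong N$. Your remark that the displayed relations should be read as $\mathcal{L}_i^{p_i^{n_i}}-1$ is a correct reading of the paper's (slightly abused) notation.
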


\begin{rmk}
It is easy to see from Theorem \ref{k-gerbe} the $K$-theory ring of the finite abelian gerbes
is independent to the triviality and nontriviality  of the gerbes.
\end{rmk}

By Theorem \ref{orbifold-coh} and \ref{k-gerbe} we have:

\begin{thm}
There exists a Chern character morphism from the $K$-theory ring $K_0(\mathcal{X},\mathbb{C})$ of the finite abelian 
$\mu$-gerbe $\mathcal{X}$ to the Chen-Ruan cohomology $H_{CR}^{*}(\mathcal{X},\mathbb{C})$, which is 
a ring isomorphism.  
\end{thm}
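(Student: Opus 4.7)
The plan is to exploit the tensor-product structure visible in both rings. By Theorem~\ref{k-gerbe}, after complexification,
\[
K_0(\mathcal{X},\mathbb{C})\cong K(B,\mathbb{C})\otimes\mathbb{C}[\mathcal{L}_1,\ldots,\mathcal{L}_s]/(\mathcal{L}_i^{p_i^{n_i}}-1),
\]
and by Proposition~\ref{orbifold-coh},
\[
H^{*}_{CR}(\mathcal{X},\mathbb{C})\cong H^{*}(B,\mathbb{C})\otimes\mathbb{C}[t_1,\ldots,t_s]/(t_i^{p_i^{n_i}}-1).
\]
The two auxiliary factors are each canonically identified with the complex group algebra $\mathbb{C}[N]$. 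I would define the Chern character as $ch\otimes\iota$, where $ch:K(B,\mathbb{C})\to H^{*}(B,\mathbb{C})$ is the classical Chern character (a ring isomorphism for smooth $B$) and $\iota$ sends $\mathcal{L}_i\mapsto t_i$. Since the defining relations on the two sides correspond, this is a well-defined ring homomorphism, and it is visibly a bijection.

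To identify this map with the stringy Chern character $ch_{orb}$ of Section~\ref{Chern-Character}, I would verify equality on the line-bundle generators $\mathcal{L}_i$. For the gerbe $\mathcal{X}$ the underlying fan of $\mathbf{\Sigma}$ is trivial ($\Sigma=0$), so the fan contains no rays; consequently every class $\mathcal{T}_v$ is zero, and the formula of Proposition~\ref{obstructionbdle} (with an empty product) forces the obstruction bundles $Ob_{(v_1,v_2,v_3)}$ to vanish as well. Hence $ch_{orb}$ reduces fibrewise to the ordinary Chern character on each twisted sector $^{P}\mathcal{X}(\mathbf{\Sigma}/\sigma(\overline{v}))\cong B$. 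Each $\mathcal{L}_i$ restricts to a line bundle on which the inertial automorphism $v$ acts as the scalar $\chi_i(v)$, while its rational first Chern class vanishes because $\mathcal{L}_i$ is torsion and rational cohomology of the gerbe pulls back from $B$. Thus $ch_{orb}(\mathcal{L}_i)\in\bigoplus_{v\in N}H^{*}(B,\mathbb{C})$ is the tuple $(\chi_i(v))_v$, which is exactly the image of $t_i$ under the Fourier identification $\mathbb{C}[N]\cong\prod_{v}\mathbb{C}$.

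The main obstacle will be confirming the multiplicative compatibility. On the $K$-theory side the stringy product~(\ref{new-product}) involves the obstruction correction $\lambda_{-1}(Ob^{\ast}_{v_1,v_2,v_3})$ and the Todd-class factors $td^{-1}\mathcal{T}_{v_i}$ entering $ch_{orb}$; the vanishings $Ob_{(v_1,v_2,v_3)}=0$ and $\mathcal{T}_v=0$ collapse all these corrections, so the stringy product reduces to the ordinary tensor product of line bundles, which respects the relations $\mathcal{L}_i^{p_i^{n_i}}=1$. The Chen-Ruan cup product on the cohomology side collapses analogously to the classical cup product combined with group-algebra multiplication in $\mathbb{C}[N]$, and the desired ring isomorphism then follows directly from matching the two presentations.
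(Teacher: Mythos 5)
Your proof is correct and takes essentially the same route as the paper, whose entire argument is to juxtapose Proposition~\ref{orbifold-coh} and Theorem~\ref{k-gerbe}: after complexification both rings are the corresponding invariant of $B$ tensored with the group algebra $\mathbb{C}[N]$, and the classical Chern character $K(B,\mathbb{C})\to H^*(B,\mathbb{C})$ supplies the isomorphism on the first factor. The additional verification you carry out --- that $\Sigma=0$ has no rays, so every $\mathcal{T}_v$ and every obstruction bundle vanishes and the abstract presentation-matching map really is the stringy Chern character $ch_{orb}$ compatible with the stringy product --- is a worthwhile detail that the paper leaves implicit.
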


\begin{rmk}
Suppose that we have two finite abelian $\mu$-gerbes over $B$, one is trivial and 
the other is nontrivial. 
We see that the $K$-theory ring and the Chen-Ruan cohomology ring cannot distinguish these two 
different stacks. However {\em quantum} cohomology rings of different gerbes are different in general  \cite{AJT1}.
\end{rmk}


\subsection*{}


\begin{thebibliography}{12}
\bibitem{AGV}  D. Abramovich, T. Graber, and A. Vistoli, \newblock Algebraic orbifold quantum product, {\em Orbifolds in mathematics and physics} (Madison, WI, 2001),1--24, Contemp. Math., 310, Amer. Math. Soc., 2002.
                  
\bibitem{AR} A. Adem and Y. Ruan, \newblock Twisted orbifold $K$-theory, {\em Comm. Math. Phys.} 273 (2003), no.3, 533--56.
                  
\bibitem{AJT1}  E. Andreini, Y. Jiang, and H.-H. Tseng, in preparation.                  

\bibitem{AS} M. F. Atiyah and G. Segal,  \newblock On equivariant Euler characteristics, {\em J. Geom. and Phys.} 6 (1989), 671--677. 
                                   
\bibitem{BCS}  L. Borisov, L. Chen, and G. Smith, \newblock The orbifold Chow ring of toric Deligne-Mumford stacks, {\em J. Amer. Math. Soc}. 18 (2005), no.1, 193--215.

\bibitem{BH} L. Borisov and R. Horja, \newblock On the $K$-theory of smooth toric Deligne-Mumford stacks, {\em  Snowbird lectures on string geometry},  21--42, Contemp. Math., 401, Amer. Math. Soc., 2006, arXiv:math/0503277.     

\bibitem{CH} B. Chen and S. Hu, \newblock A deRham model for Chen-Ruan cohomology ring of abelian orbifolds, {\em Math. Ann.} 336 (1), 51--71 (2006).       

\bibitem{CR1}  W. Chen and Y. Ruan, \newblock A new cohomology theory for orbifolds, {\em Comm. Math. Phys.} 248, 1--31 (2004).


                 
\bibitem{JKK} T. Jarvis, R. Kaufmann, and T. Kimura, \newblock Stringy $K$-theory and the Chern character, {\em Invent. Math.} 168 (2007), no. 1, 23--81, math.AG/0502280.                

\bibitem{Jiang}  Y. Jiang, \newblock The orbifold cohomology of simplicial toric stack bundles, {\em Illinois  J. Math}. Vol.52 (2008), No.1,  math.AG/0504563.

\bibitem{Jiang2} Y. Jiang, \newblock A note on finite abelian gerbes over toric Deligne-Mumford stacks, to appear in {\em Proceedings of AMS},  arXiv:0807.3285.

\bibitem{JT}Y. Jiang and H.-H. Tseng, \newblock  Note on orbifold Chow ring of semiprojective toric DM stacks, {\em Comm. Anal. Geom.}, Vol. 16, No.1, 2008, 231--250, math.AG/0606322.

\bibitem{JT2}Y. Jiang and H.-H. Tseng, \newblock The integral (orbifold) Chow ring of toric Deligne-Mumford stacks, arXiv:0707.2972.

\bibitem{Lang}  S. Lang, \newblock {\em Algebra}, Graduate Texts in Mathematics, Springer-Verlag, 2002.

\bibitem{l-mb} G. Laumon and L. Moret-Bailly, {\em Champs alg\'ebriques. (French) [Algebraic stacks]}, Ergebnisse der Mathematik und ihrer Grenzgebiete. 3. Folge. 39. Springer-Verlag, Berlin, 2000.




\bibitem{SU}  P. Sankaran and V. Uma, \newblock Cohomology of toric bundles, {\em Comment. Math. Helv.}, 78 (2003), 540--554. 
                 
\bibitem{SGA6} {\em Th\'eorie des intersections et th\'eor\`eme de Riemann-Roch}, S\'eminaire de G\'eom\'etrie Alg\'ebrique du Bois-Marie 1966--1967 (SGA 6), Dirig\'e par P. Berthelot, A. Grothendieck et L. Illusie. Avec la collaboration de D. Ferrand, J. P. Jouanolou, O. Jussila, S. Kleiman, M. Raynaud et J. P. Serre, Lecture Notes in Mathematics, Vol. 225. Springer-Verlag, 1971.
                 
\bibitem{toen} B. To\"en, \newblock Th\'eor\`emes de Riemann-Roch pour les champs de Deligne-Mumford, {\em $K$-Theory}, 18  (1999),  no. 1, 33--76.

                 
\bibitem{V2} A. Vistoli,  \newblock Higher equivariant K -theory for finite group actions, {\em  Duke Math. J.},  63 (1991),  no. 2, 399--419. 

\bibitem{VV} G. Vezzosi and A. Vistoli, \newblock  Higher algebraic $K$-theory for actions of diagonalizable groups, {\em Invent. Math.} 153 (2003), no. 1, 1--44.    

\bibitem{ZS} O. Zariski and P. Samuel, \newblock  {\em Commutative Algebra} Vol. I, Graduate Texts in Mathematics, No. 28. Springer-Verlag, 1975.
\end{thebibliography}
\end{document}